\newtheorem{theorem}{Theorem}
\newtheorem{q}[theorem]{Question}
\newtheorem{conj}[theorem]{Conjecture}
\newtheorem{lemma}[theorem]{Lemma}
\def\I{\mathcal{I}}
\title{Tur\'an and Ramsey Properties of Subcube Intersection Graphs} 
\author{J.~Robert Johnson\thanks{School of Mathematical Sciences, Queen Mary University of London, London E1 4NS, UK
(\tt{r.johnson@qmul.ac.uk})} \and Klas Markstr\"om\thanks{Department of Mathematics and Mathematical Statistics, Ume\aa{\ } University,
Ume\aa \ 901 87, Sweden (\tt{klas.markstrom@math.umu.se})}} 
\begin{document}
\maketitle

\begin{abstract}
The discrete cube $\{0,1\}^d$ is a fundamental combinatorial structure. A subcube of $\{0,1\}^d$ is a subset of $2^k$ of its points formed by fixing $k$ coordinates and allowing the remaining $d-k$ to vary freely. The subcube structure of the discrete cube is surprisingly complicated and there are many open questions relating to it. 

This paper is concerned with patterns of intersections among subcubes of the discrete cube. Two sample questions along these lines are as follows: given a family of subcubes in which no $r+1$ of them have non-empty intersection, how many pairwise intersections can we have? How many subcubes can we have if among them there are no $k$ which have non-empty intersection and no $l$ which are pairwise disjoint?

These questions are naturally expressed using intersection graphs. The intersection graph of a family of sets has one vertex for each set in the family with two vertices being adjacent if the corresponding subsets intersect. Let $\I(n,d)$ be the set of all $n$ vertex graphs which can be represented as the intersection graphs of subcubes in $\{0,1\}^d$. With this notation our first question above asks for the largest number of edges in a $K_{r+1}$-free graph in $\I(n,d)$. As such it is a Tur\'an type problem. We answer this question asymptotically for some ranges of $r$ and $d$. More precisely we show that if $(k+1)2^{\lfloor\frac{d}{k+1}\rfloor}<n\leq k2^{\lfloor\frac{d}{k}\rfloor}$ for some integer $k\geq 2$ then the maximum edge density is $\left(1-\frac{1}{k}-o(1)\right)$ provided that $n$ is not too close to the lower limit of the range.

The second question can be thought of as a Ramsey type problem. The maximum such $n$ can be defined in the same way as the usual Ramsey number but only considering graphs which are in $\I(n,d)$.  We give bounds for this maximum $n$ mainly concentrating on the case that $l$ is fixed and make some comparisons with the usual Ramsey number.

Tur\'an and Ramsey type problems are at the heart of extremal combinatorics and so these problems are mathematically natural. However, a second motivation is a connection with some questions in social choice theory arising from a simple model of agreement in a society. Specifically, if we have to make a binary choice on each of $n$ separate issues then it is reasonable to assume that the set of choices which are acceptable to an individual will be represented by a subcube. Consequently, the pattern of intersections within a family of subcubes will have implications for the level of agreement within a society. A further motivation is the fact that our subcube intersection graphs can be thought of as a discrete analogue of $d$-box graphs. The idea of $d$-box graphs has turned out to be a fruitful concept for applications in areas such as ecological and social networks. It may be hoped that our subcube intersection graphs will find similar application.

We pose a number of questions and conjectures relating directly to the Tur\'an and Ramsey problems as well as raising some further directions for study of subcube intersection graphs. 

\end{abstract}

\section{Introduction}

Suppose that $A_1,\dots A_n$ are subsets of some ground set $X$. The associated intersection graph is formed by taking one vertex for each $A_i$ with two vertices being adjacent if the corresponding subsets intersect. Intersection graphs are well-studied objects (see the book \cite{McMc} and references therein). In some cases $X$ is an arbitrary finite set but more usually $X$ has some structure and we allow only certain subsets. A simple example is the family of interval graphs where we take $X=\mathbb{R}$ with the subsets $A_i$ being intervals. More generally $d$-box graphs \cite{R} have $X=\mathbb{R}^d$ with the subsets being axis-parallel boxes (that is products of intervals). 

In this paper we are concerned with a natural discrete analogue of $d$-box graphs. Specifically we will take our ground set to be $\{0,1\}^d$ (the vertices of the discrete cube) with our subsets being subcubes (a subcube is a set of points of the discrete cube formed by fixing $k$ coordinates and allowing the remaining $d-k$ to vary freely). One feature of this situation is that the Helly property holds: if we have a family of pairwise intersecting subcubes then there is a point of the ground set contained in all of them. Thus the intersection graph tells us everything about the intersection structure of the set system. 

We mention in passing that the problem of representing a graph as the intersection graph of discrete subcubes has been considered in a slightly different guise \cite{JK}. Specifically the \emph{biclique cover number} of a graph is the minimum $d$ for which the edges of $G$ can be covered by $d$ complete bipartite subgraphs of $G$. It is easy to see that this is also the smallest $d$ for which the complement of $G$ can be represented as the intersection graph of subcubes in $\{0,1\}^d$. 

Our interest is rather different. Instead of considering how to represent a given graph as the intersection graph of subcubes we will consider the set of all graphs which can be represented as the intersection graph of $n$ subcubes in $\{0,1\}^d$. In particular we will be interested in Tur\'an and Ramsey properties of this set of graphs. We will pose precise questions shortly but roughly speaking Tur\'an type questions ask how many edges do we need to guarantee a particular subgraph while Ramsey type questions ask how many vertices do we need to guarantee either a large complete subgraph or a large empty subgraph. For background to Tur\'an and Ramsey problems in graph theory as well as basic graph theoretic terminology and notation see \cite{MGT}. These two types of extremal problem lie at the heart of extremal combinatorics so these questions are extremely natural mathematically. In addition intersection graphs have been used in a range of applications in many areas from biology to computing (\cite{McMc} provides brief descriptions of a number of applications and further references). Our subcube intersection graphs and the extremal questions we study arise very naturally from a simple model in social choice theory which we describe next.

One motivation for the study of certain intersection graphs (as described in \cite{BNSTW}) comes from a model for agreement in a society based on approval voting. Here we have a set $X$ of all possible preferences (the ``political spectrum'') and $n$ individuals each of which finds some subset of $X$ to be acceptable (we refer to this subset as the individual's ``approval set''). Usually we assume that $X$ has some structure and the possible approval sets reflect this. For instance we could take $X=\mathbb{R}$ and insist that all approval sets are intervals. The connections between this model and various geometric intersection theorems were explored by Berg, Norine, Su, Thomas and Wollan \cite{BNSTW}. Suppose that we take a set of individuals $[d]=\{1,2,\dots,d\}$ and let individual $i$ have approval set $A_i$. Let $G$ be the intersection graph on these approval sets. If $A_i\cap A_j\not=\emptyset$ then there is a point of the political spectrum which is acceptable to both $i$ and $j$; in this case we will say that $i$ and $j$ \emph{agree}. Now the number of edges in $G$ is the number of pairs of individuals who agree. The clique number of $G$ is the size of the largest set of individuals who pairwise agree (if the family of possible approval sets has the Helly property then this is the same as the largest set of individuals who collectively find some point of $X$ acceptable). The independence number of $G$ is the largest set of individuals no two of whom agree. These are all natural things to consider in the agreement model and Berg et al. \cite{BNSTW} raise some interesting questions on them. They are mainly concerned with the case $X=\mathbb{R}^d$ with either all approval sets being convex sets (here the Helly property does not hold) or all approval sets being axis-parallel boxes (here the Helly property does hold). As an example of their results they prove that if $G$ is an interval graph on $n$ vertices and within every set of $m$ vertices we can find a copy of $K_k$ then the clique number of $G$ is at least $\frac{k-1}{m-1}n$ (they refer to a society in which the hypotheses of this result hold as being $(k,m)$-agreeable). In agreement language this says that if among any $m$ individuals we can find $k$ who agree then we can find a proportion $\frac{k-1}{m-1}$ of the whole society who agree. 

Suppose that we wish to describe preferences and agreement over $d$ binary (yes or no) issues. The natural political spectrum to take here is $X=\{0,1\}^d$. An individual may have a view in either direction on some of the issues and have no preference on the remaining ones. In other words the approval sets will be subcubes of $\{0,1\}^d$. This motivates the study of the intersection graphs of subcubes.

Having given some background informally let us now introduce some notation. Let $C_d=\{0,1,*\}^d$. We represent a subcube as a vector $u\in C_d$ where $u$ represents the subcube of $\{0,1\}^d$ consisting of all points $x\in\{0,1\}^d$ for which $u_i=x_i$ whenever $u_i\not=*$. For $u\in C_d$ we will write $F(u)=\{i:u_i\not=*\}$ for the set of fixed coordinates of $u$. The dimension of $u$ is $d-|F(u)|$ and the codimension of $u$ is $|F(u)|$. With this notation an intersection graph of subcubes has as vertex set some multiset from $C_d$ with $x,y$ adjacent if $x_i=y_i$ for all $u\in F(x)\cap F(y)$. Let $\I(n,d)$ be the set of all graphs on $n$ vertices which can be realised as intersection graphs of subcubes of $\{0,1\}^d$.  

In this paper we will mainly be concerned with two questions each of which is both mathematically natural and relevant to the agreement model. 

The first is a Tur\'an type question. Suppose that $G\in\I(n,d)$ is a $K_{r+1}$-free graph, how many edges can $G$ have? In agreement terms this will give a lower bound on the largest set of individuals we can guarantee to find all of whom agree on some point of $X$ in terms of the number of pairs of individuals who agree. A result of this type for intersection graphs of boxes in $\mathbb{R}^d$ was proved by Eckhoff \cite{Eckhoff}. For convex sets in $\mathbb{R}^d$ the fractional Helly theorem of Kalai \cite{K} has some similarity although it considers $d+1$-wise intersections.

The second is a Ramsey type question. Suppose that $G\in\I(n,d)$ contains no independent set of size $l$, how small can the clique number of $G$ be? In agreement terms this asks for a lower bound on the largest set of individuals we can guarantee to find all of whom agree on some point of $X$ under the assumption that we have no $l$ individuals who pairwise disagree (compare this with the condition in the result on $(k,m)$-agreeable societies from \cite{BNSTW} above).

There are numerous open questions and directions for further study involving subcube intersection graphs. These include extending the extremal results we prove here as well as other issues such as random subcube intersection graphs. In the final section we collect some of these questions. This is in addition to a number of conjectures and problems which are raised in the main body of the paper.

\section{Tur\'an problems}

A general Tur\'an type problem asks for the largest size a structure can be without containing a particular forbidden substructure. Tur\'an's theorem itself states that the maximum number of edges in a $K_{r+1}$-free graph is attained by a complete $r$-partite graph with parts as equal as possible (the so called Tur\'an graph). There are numerous extensions, for instance  to other forbidden graphs and more generally to other combinatorial structures such as hypergraphs. Chapter 4 of \cite{MGT} provides a good introduction to Tur\'an type problems in graphs

\begin{q}
What is the maximum number of edges in a $K_{r+1}$-free graph in $\I(n,d)$?
\end{q}

It turns out to be helpful to consider different ranges of $n$ separately as the behaviour of the extremal number of edges varies. Generally we will think of $r$ as being fixed, $d$ being large, and $n$ being in some range depending on $d$ (and $r$). Note that if $n>r2^d$ then some point of $\{0,1\}^d$ is contained in at least $r+1$ of our subcubes (by the pigeon-hole principle) and so there are no $K_{r+1}$-free graphs in $\I(n,d)$ for such $n$. Below this the maximum number of edges we can have is increasing in $n$.

\begin{lemma}\label{increase:lem}
Suppose that $n<n'\leq r2^d$. If $G\in\I(n,d)$ is $K_{r+1}$-free then we can find a $K_{r+1}$-free graph $G'\in\I(n',d)$ with $e(G')\geq e(G)$.
\end{lemma}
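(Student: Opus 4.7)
My plan is to build $G'$ from $G$ by an inductive one-step process: I will show that whenever a $K_{r+1}$-free $G \in \I(n,d)$ has $n < r 2^d$, one can produce a $K_{r+1}$-free $H \in \I(n+1,d)$ with $e(H) \geq e(G)$; iterating $n' - n$ times then yields the desired $G'$. Starting from any subcube representation $u^{(1)}, \dots, u^{(n)} \in C_d$ of $G$, I will use one of two operations to pass from $n$ to $n+1$ vertices.

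The first operation is a \emph{split}: if some $u^{(i)}$ has a free coordinate $j \notin F(u^{(i)})$, I replace $u^{(i)}$ by the two halves $v, w$ obtained by fixing coordinate $j$ to $0$ and to $1$ respectively. Since $v$ and $w$ partition the point set of $u^{(i)}$, any other subcube $u^{(k)}$ meets $u^{(i)}$ if and only if it meets at least one of $v, w$, so every neighbour of $u^{(i)}$ contributes to the new degree of $v$ or of $w$; as $v \cap w = \emptyset$ no edge is created between them, giving $e(H) \geq e(G)$. For $K_{r+1}$-freeness, suppose $H$ contained a $K_{r+1}$; the Helly property for subcubes (noted in the introduction) produces a common point $x$ of the $r+1$ subcubes involved, and since $v, w$ are disjoint at most one of them can appear. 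Replacing that one by $u^{(i)}$ (which also contains $x$) yields $r+1$ subcubes of $G$ all containing $x$, hence a $K_{r+1}$ in $G$, a contradiction.

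The second operation is a \emph{singleton addition}, used when every $u^{(i)}$ already has codimension $d$, i.e.\ represents a single point. In this all-singleton case, $K_{r+1}$-freeness is equivalent to no point of $\{0,1\}^d$ being repeated more than $r$ times in the multiset, so $n \leq r 2^d$. If $n < r 2^d$ some point $x$ is repeated at most $r - 1$ times; adjoining a fresh copy of $x$ keeps the multiplicity of every point bounded by $r$ and adds only edges to existing copies of $x$, so both $K_{r+1}$-freeness and $e(H) \geq e(G)$ are preserved.

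At least one of the two operations applies whenever $n < r 2^d$, so iterating $n' - n$ times reaches the target. The step I expect to require the most care is the verification that splitting preserves $K_{r+1}$-freeness, and this is where the Helly property for subcubes is doing essential work; the rest is routine bookkeeping of degrees.
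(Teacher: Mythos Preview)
Your proof is correct and follows essentially the same two-case approach as the paper's proof: split a subcube with a free coordinate, or add a copy of an underrepresented singleton when all subcubes are already points. You supply more detail than the paper does, in particular the explicit verification via the Helly property that splitting cannot create a $K_{r+1}$, which the paper leaves implicit.
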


\begin{proof}
It suffices to consider the case $n'=n+1$. If all subcubes corresponding to vertices in $V(G)$ are singletons (that is they have dimension 0) then some subcube occurs at most $r-1$ times. Adding an extra copy of this subcube to the vertex set gives a suitable $G'$. Otherwise we have some subcube $v\in V(G)$ for which $v_i=*$ for some $i$. Delete this vertex and adding two new vertices $v^0$ and $v^1$ with $v^0_i=0$, $v^1_i=1$ and $v^0_j=v^1_j=v_j$ for all $j\not=i$ gives a suitable $G'$.
\end{proof}

The next two theorems give two simple upper bounds on the number of edges we can have, each of which can be achieved in a certain range.

Let $T_r(n)$ be the $r$-partite Tur\'an graph on $n$ vertices and $t_r(n)=e(T_r(n))$.

\begin{theorem}\label{turan:thm}
If $G\in\I(n,d)$ is $K_{r+1}$-free then $e(G)\leq t_{r}(n)$. This bound is best possible for $n\leq r2^{\lfloor d/r\rfloor}$.
\end{theorem}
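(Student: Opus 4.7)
The upper bound is immediate: Turán's theorem says that \emph{every} $K_{r+1}$-free graph on $n$ vertices has at most $t_r(n)$ edges, so in particular this holds for any $G\in\I(n,d)$. All the work is in showing that for $n\leq r2^{\lfloor d/r\rfloor}$ the Turán graph $T_r(n)$ actually lies in $\I(n,d)$.

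For the construction, set $k=\lfloor d/r\rfloor$ and partition the coordinates $[d]$ into $r$ pairwise disjoint blocks $B_1,\ldots,B_r$ of size $k$ each (this is possible since $rk\le d$; any leftover coordinates can simply be set to $*$ in every subcube and ignored). Inside block $B_i$, consider the family $\mathcal{F}_i$ of up to $2^k$ subcubes obtained by fixing the coordinates in $B_i$ in all $2^k$ possible ways while leaving every coordinate outside $B_i$ equal to $*$. Two subcubes in the same $\mathcal{F}_i$ have fixed-coordinate set $B_i$ and disagree somewhere in $B_i$, hence are disjoint. Two subcubes from $\mathcal{F}_i$ and $\mathcal{F}_j$ with $i\neq j$ have fixed-coordinate sets $B_i$ and $B_j$ which are disjoint, so they agree trivially on their common fixed coordinates (there are none) and hence intersect.

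Now since $n\leq r2^k$ we have $\lceil n/r\rceil\leq 2^k$, so we can choose $\lceil n/r\rceil$ or $\lfloor n/r\rfloor$ subcubes from each $\mathcal{F}_i$ so that the total is $n$ and the class sizes differ by at most one. The resulting intersection graph is the complete $r$-partite graph with parts as equal as possible, which is precisely $T_r(n)$. Therefore $T_r(n)\in\I(n,d)$, matching the upper bound.

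The only mild obstacle is the bookkeeping at the endpoints: checking that the Turán partition sizes really fit (i.e.\ $\lceil n/r\rceil\leq 2^k$) and that leftover coordinates $d-rk<r$ cause no trouble; both are handled by just padding with $*$s. Beyond this, the argument is a clean product-style construction that directly mirrors the structure of $T_r(n)$.
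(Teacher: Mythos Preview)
Your proof is correct and follows essentially the same approach as the paper's: both invoke Tur\'an's theorem for the upper bound and construct $T_r(n)$ by choosing $r$ pairwise disjoint blocks of size $\lfloor d/r\rfloor$ as the fixed-coordinate sets, then taking an appropriately balanced subfamily. Your write-up is in fact a bit more explicit about the bookkeeping (verifying $\lceil n/r\rceil\le 2^{\lfloor d/r\rfloor}$ and handling the leftover coordinates), but the underlying idea is identical.
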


\begin{proof} The bound is simply Tur\'an's theorem for any $K_{r+1}$-free graph.

To achieve this let $\lfloor d/r\rfloor=t$ and let $P_1,\dots, P_r$ be pairwise disjoint $t$-sets in $[d]$. Let $V$ be the collection of all subcubes $u\in C_d$ with $F(u)=P_i$ for some $i=1,\dots,r$. Any two cubes in $V$ with the same fixed set do not intersect while any two with different fixed sets do intersect so this graph is complete $r$-partite with $2^t$ vertices in each class. Taking a suitable subgraph shows that $T_r(n)\in\I(n,d)$ for any $n\leq r2^t$.
\end{proof}

\begin{theorem}\label{absolute:thm}
If $G\in\I(n,d)$ is $K_{r+1}$-free then $e(G)\leq \binom{r}{2} 2^d$. This bound is best possible for $n\geq r 2^{d-\lfloor d/r\rfloor}$.
\end{theorem}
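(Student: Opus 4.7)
The plan is to prove the upper bound by a double count over points of $\{0,1\}^d$ and then exhibit a saturating construction which we extend to larger $n$ using Lemma~\ref{increase:lem}. For the upper bound, the key observation is that the Helly property for subcubes combined with $K_{r+1}$-freeness forces every point of $\{0,1\}^d$ to lie in at most $r$ of the subcubes: otherwise the $r+1$ subcubes through a common point would be pairwise intersecting and yield a $K_{r+1}$ in $G$. Writing $d_x$ for the number of subcubes containing $x$, every edge of $G$ corresponds to at least one common point, so
\[
e(G) \le \sum_{x\in\{0,1\}^d}\binom{d_x}{2} \le 2^d \binom{r}{2}.
\]

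For tightness, I first build an example at the threshold $n = r 2^{d-\lfloor d/r\rfloor}$. Set $t=\lfloor d/r\rfloor$ and choose pairwise disjoint $P_1,\dots,P_r\subseteq[d]$ with $|P_i|=t$. For each $i$ let $V_i$ be the family of \emph{all} subcubes $u$ with $F(u)=[d]\setminus P_i$; that is, subcubes of dimension $t$ whose free coordinates are exactly those in $P_i$. Then $|V_i|=2^{d-t}$ and the total number of vertices is $r 2^{d-t}$. This is ``dual'' to the construction in Theorem~\ref{turan:thm}, where the $P_i$ were the fixed sets rather than the free sets. Two members of the same $V_i$ share the same fixed set but disagree on it, so they are disjoint; whereas two members $u\in V_i$, $v\in V_j$ with $i\neq j$ have $F(u)\cup F(v)=[d]$, so they meet in \emph{exactly} one point when their values agree on $[d]\setminus(P_i\cup P_j)$ and are disjoint otherwise.

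Counting via points, every $x\in\{0,1\}^d$ lies in exactly one subcube of each $V_i$ (the one whose fixed coordinates match $x$ outside $P_i$), so $d_x=r$ for all $x$. Hence by Helly no $r+1$ subcubes share a point and $G$ is $K_{r+1}$-free, while the edges of $G$ are in bijection with pairs of subcubes containing a common point, giving $e(G)=\sum_x\binom{r}{2}=\binom{r}{2}2^d$ edges. For any $n$ with $r 2^{d-t}\le n\le r 2^d$, iterating Lemma~\ref{increase:lem} starting from this construction produces a $K_{r+1}$-free graph in $\I(n,d)$ with at least $\binom{r}{2}2^d$ edges, which together with the upper bound forces equality.

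The main obstacle is identifying the correct construction: the complete $r$-partite family from Theorem~\ref{turan:thm} yields only $\binom{r}{2}2^{2t}$ edges, which is far below the target once $2t<d$. Swapping the roles of fixed and free coordinates simultaneously increases the vertex count to the higher threshold and makes every point of $\{0,1\}^d$ saturated (lying in exactly $r$ subcubes), which is precisely the equality case of the double count that powers the upper bound.
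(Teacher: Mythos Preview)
Your proof is correct and essentially identical to the paper's: the same double count over points of $\{0,1\}^d$ for the upper bound, the same construction (subcubes with free set $P_i$ for disjoint $t$-sets $P_1,\dots,P_r$), and the same appeal to Lemma~\ref{increase:lem} for larger $n$. The only cosmetic difference is that you invoke the Helly property explicitly in a couple of places where the paper leaves it implicit---and in the upper-bound step it is not actually needed, since $r+1$ subcubes through a common point are trivially pairwise intersecting.
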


\begin{proof}
Let $V$ be the vertex set of a $K_{r+1}$-free graph $G$ in $\I(n,d)$. Each point $x\in \{0,1\}^d$ lies in at most $r$ subcubes in $V$. It follows that each such $x$ is involved in at most $\binom{r}{2}$ intersections between pairs of subcubes in $V$. Hence $e(G)\leq\binom{r}{2} 2^d$.

As before let $\lfloor d/r\rfloor=t$ and let $P_1,\dots, P_r$ be pairwise disjoint $t$-sets in $[d]$. Let $V$ be the collection of subcubes $u$ with $F(u)=[d]\setminus P_i$ for some $i=1,\dots,r$. Now any two subcubes $x,y$ in $V$ which intersect have $F(x)\cup F(y)=[d]$. It follows that $x$ and $y$ intersect in only a single point of $\{0,1\}^d$. Further every point of $\{0,1\}^d$ is contained in exactly $r$ subcubes in $V$. Hence the bound is attained when $n=r 2^{d-\lfloor d/r\rfloor}$. For larger $n$ we apply Lemma \ref{increase:lem}.
\end{proof}

Note that in both the constructions described above, if $r$ does not divide $d$ we can modify the $P_1,\dots,P_r$ so that they form a partition of $[d]$ by letting some of the $P_i$ have size $\lceil d/r\rceil$. This gives a slightly larger range of $n$ for which the bounds can be attained.  

The simplest large $K_{r+1}$-free graph is a complete $r$-partite graph. Following the construction in the proof of Lemma \ref{turan:thm}, a natural way of constructing a complete $r$-partite graph in $\I(n,d)$ is to fix a partition $[d]=P_1\cup\dots\cup P_r$ and to take all elements of $C_d$ whose fixed set is some $P_i$. If $|P_i|=d_i$ then this graph has $\sum_{i=1}^r 2^{d_i}$ vertices and $\binom{n}{2}-\sum_{i=1}^r\binom{2^{d_i}}{2}$ edges. For certain values of $n$ we may have to take a subgraph of the graph constructed in this way.
We will describe this type of construction as an $r$-partite construction based on a partition of $[d]$ and conjecture that for small $n$ (made precise in the statement below) the extremal graph for our problem has this form.

\begin{conj}\label{smalln:con}
For any $n\leq 2\times2^{d/2}+(r-2)$ the maximum number of edges in a $K_{r+1}$-free graph in $\I(n,d)$ is attained by a graph $G$ which is a subgraph of a graph $H$ of the form  
\[
V(H)=\{x\in C_d: F(x)=P_i\}
\]
where $P_1,\dots,P_r$ is a partition of $[d]$.
\end{conj}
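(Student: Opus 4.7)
The plan is to show that the partition construction from the conjecture is in fact optimal in the stated range, so an extremal graph of the conjectured form is produced by the construction itself. Define $T^\ast(n,d,r)=\binom{n}{2}-\min\sum_{i=1}^r\binom{n_i}{2}$, where the minimum is over partitions $\sum d_i=d$ (with distinct $P_i$, so at most one is empty) and over non-negative $n_i\leq 2^{d_i}$ with $\sum n_i=n$. This is the maximum edge count among $n$-vertex subgraphs of graphs $H$ of the stated form; in the given range the minimiser takes $|P_1|=|P_2|=\lfloor d/2\rfloor$ with the remaining parts trivial, producing the lower bound on the extremal edge count. Since the construction itself already realises a graph of the conjectured form, the conjecture reduces to the matching upper bound $e(G)\leq T^\ast(n,d,r)$ for every $K_{r+1}$-free $G\in\I(n,d)$.

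For $n\leq r\cdot 2^{\lfloor d/r\rfloor}$ this upper bound is immediate from Tur\'an's theorem (Theorem~\ref{turan:thm}), because the partition construction already attains $t_r(n)$. For $n$ in the remaining sub-range, up to $2\cdot 2^{d/2}+(r-2)$, Tur\'an is slack and the subcube structure must be exploited. Two natural tools to combine are (i) the observation that each point $x\in\{0,1\}^d$ lies in at most $r$ subcubes (otherwise they form a $K_{r+1}$), so if $f(x)=|\{i:x\in v_i\}|$ then $e(G)\leq\sum_x\binom{f(x)}{2}$ with $f(x)\leq r$ and $\sum_x f(x)=\sum_i|v_i|$; and (ii) a Kraft-type inequality $\sum_{v\in V'}2^{-|F(v)|}\leq 1$ valid for every independent set $V'\subseteq V(G)$, which ties the size of an independent set to the codimensions of the corresponding subcubes.

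The hard part will be the upper bound step when $n>r\cdot 2^{\lfloor d/r\rfloor}$, where Tur\'an alone is insufficient. I would attempt induction, either on $r$ or on $n$. Inducting on $r$ means removing an independent set $V_1\subseteq V(G)$ so that $G-V_1\in\I(n-|V_1|,d)$ is $K_r$-free; note this is not automatic for an arbitrary independent set, so $V_1$ must be chosen to hit every $K_r$, which in turn requires showing $\alpha(G)$ is large enough in the extremal case. Inducting on $n$ instead removes a single vertex and uses $e(G)\leq e(G-v)+\deg(v)$, bounding $\deg(v)$ via the dimension of the corresponding subcube. In either case the precise range $n\leq 2\cdot 2^{d/2}+(r-2)$ is presumably where two large independent sets of size $\approx 2^{d/2}$ dominate the extremal structure, the other $r-2$ vertices appearing as degenerate singleton parts; ensuring that the induction hypothesis transfers cleanly between instances (so that the residual problem still lies in the conjectured range) is the delicate step.
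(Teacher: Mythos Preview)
The statement you are attempting to prove is labelled a \emph{Conjecture} in the paper, not a theorem; the authors explicitly present it as open and offer no proof. Consequently there is no argument in the paper to compare your attempt against. What the paper does establish are the partial results surrounding it: Theorem~\ref{turan:thm} gives the Tur\'an bound (tight for $n\le r2^{\lfloor d/r\rfloor}$), Theorem~\ref{absolute:thm} gives the absolute bound $\binom{r}{2}2^d$, and Theorem~\ref{smalln:thm} gives an asymptotic edge-density result via supersaturation. None of these amounts to a proof of the conjecture in the full stated range.

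Your proposal is, in any case, not a proof but a plan, and you yourself flag the gap. You correctly observe that for $n\le r2^{\lfloor d/r\rfloor}$ the conjecture reduces to Tur\'an's theorem, and you identify the two natural constraints (the pointwise cover bound $f(x)\le r$ and a Kraft-type inequality on independent sets). But for the range $r2^{\lfloor d/r\rfloor}<n\le 2\cdot 2^{d/2}+(r-2)$ you only sketch two possible inductive strategies without executing either. The induction on $r$ requires an independent set meeting every $K_r$, and you do not show one exists of the right size; the induction on $n$ requires a degree bound on some vertex that you do not supply. Moreover, the ``Kraft-type inequality'' you invoke is not quite right as stated: an independent set of subcubes need not satisfy $\sum 2^{-|F(v)|}\le 1$ (two disjoint subcubes can each have small codimension provided they disagree on a fixed coordinate), so that tool would need to be formulated more carefully before it can be used. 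In short, the essential difficulty the authors left open remains open in your write-up as well.
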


Finding the maximum number of edges a graph constructed in this way can have (allowing also for subgraphs of it) is equivalent to solving the following optimization problem.

Given $n,r,d$ find $n_1,\dots,n_r,d_1,\dots,d_r\geq 0$ to minimize 
\[
\sum_{i=1}^r\binom{n_i}{2}
\]
subject to
\begin{align*}
\sum_{i=1}^{r} d_i&=d\\
\sum_{i=1}^{r} n_i&=n\\
n_i&\leq 2^{d_i}.
\end{align*}

Given $d_i$ it is clear that we should choose the $n_i$ to be as equal as possible subject to the constraints. However, when we are free to choose the $d_i$ it does not seem easy to give an exact solution to this optimization problem. Despite this we can use the construction to give lower bounds which in some cases have asymptotically optimal edge density. This theorem applies to the range $r 2^{d/r}<n\leq 2\times 2^{d/2}$. For smaller $n$ Lemma \ref{turan:thm} applies while larger $n$ will be considered later.

\begin{theorem}\label{smalln:thm}
If $n\leq k2^{\lfloor\frac{d}{k}\rfloor}$ with $2\leq k\leq r$ an integer then there is a $K_{r+1}$-free graph in $\I(n,d)$ with $t_k(n)$ edges.

Moreover, for all $\epsilon>0$ there exists $\delta>0$ such that if $k$ is as above and $\frac{1}{n}2^{\frac{d}{k+1}}<\delta$ then the maximum number of edges in a $K_{r+1}$-free graph in $\I(n,d)$ is $(1-\frac{1}{k}+\epsilon)\binom{n}{2}$.
\end{theorem}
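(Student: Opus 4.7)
The first part (existence of a $K_{r+1}$-free graph with $t_k(n)$ edges) will follow the $k$-partite construction in the proof of Theorem~\ref{turan:thm}. I pick pairwise disjoint sets $P_1,\ldots,P_k\subseteq[d]$ of size $\lfloor d/k\rfloor$ each and let $V_i=\{u\in C_d:F(u)=P_i\}$, so $|V_i|=2^{\lfloor d/k\rfloor}$. Two subcubes in the same $V_i$ disagree on some coordinate of $P_i$ and are disjoint, while subcubes in distinct $V_i$'s have disjoint fixed sets and hence automatically intersect; so the intersection graph on $V_1\cup\cdots\cup V_k$ is complete $k$-partite with $k$ balanced classes of size $2^{\lfloor d/k\rfloor}$. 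Since $n\leq k2^{\lfloor d/k\rfloor}$, taking $\lceil n/k\rceil$ or $\lfloor n/k\rfloor$ vertices from each class realises $T_k(n)$ as an induced subgraph; this graph is $K_{k+1}$-free (hence $K_{r+1}$-free, as $k\leq r$) and has exactly $t_k(n)$ edges.

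For the matching asymptotic upper bound, the plan is to combine the Helly property with Erd\H{o}s--Simonovits supersaturation. Fix any $K_{r+1}$-free $G\in\I(n,d)$ realised by subcubes $u_1,\ldots,u_n$ and set $d_x=|\{i:x\in u_i\}|$ for each $x\in\{0,1\}^d$. The $K_{r+1}$-freeness forces $d_x\leq r$, since otherwise any $r+1$ subcubes through $x$ would form a clique. By the Helly property for subcubes stated in the introduction, every $K_{k+1}$ in $G$ corresponds to $k+1$ pairwise intersecting subcubes that share a common point of $\{0,1\}^d$, so
\[
\#\{K_{k+1}\text{'s in }G\}\leq\sum_{x\in\{0,1\}^d}\binom{d_x}{k+1}\leq 2^d\binom{r}{k+1}.
\]
On the other hand, supersaturation gives, for each $\epsilon>0$, a constant $c=c(\epsilon,k)>0$ such that any graph on $n$ vertices with more than $(1-\frac{1}{k}+\epsilon)\binom{n}{2}$ edges contains at least $cn^{k+1}$ copies of $K_{k+1}$. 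Chaining these inequalities yields $cn^{k+1}\leq 2^d\binom{r}{k+1}$, i.e.\ $n\leq C\cdot 2^{d/(k+1)}$ for some $C=C(\epsilon,k,r)$. Setting $\delta=1/C$, the hypothesis $\frac{1}{n}2^{d/(k+1)}<\delta$ is then incompatible with $e(G)>(1-\frac{1}{k}+\epsilon)\binom{n}{2}$, giving the claimed upper bound.

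The structural heart of the argument is the Helly-plus-bounded-multiplicity estimate $\sum_x\binom{d_x}{k+1}\leq 2^d\binom{r}{k+1}$; this is the only step that genuinely uses the subcube structure, and it is exactly what prevents the cruder Tur\'an bound $(1-\frac{1}{r}+o(1))\binom{n}{2}$ from being attained once $n$ is large enough compared to $2^{d/(k+1)}$. I view this as the key insight rather than a genuine obstacle: the rest of the proof is routine, with supersaturation applied as a black box. The only mildly delicate calibration is ensuring $d$ (and hence $n$) is large enough that the $O(1/n)$ gap between $t_k(n)/\binom{n}{2}$ and $1-\frac{1}{k}$ can be absorbed into $\epsilon$, which is automatic under the stated hypothesis for $d$ sufficiently large.
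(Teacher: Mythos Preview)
Your proof is correct and follows essentially the same route as the paper: the same $k$-partite construction based on disjoint coordinate blocks for the lower bound, and the same combination of the Helly property (giving at most $\binom{r}{k+1}2^d$ copies of $K_{k+1}$) with Erd\H{o}s--Simonovits supersaturation for the upper bound. Your presentation of the clique-count as $\sum_x\binom{d_x}{k+1}$ is slightly more explicit than the paper's, but the argument is identical in substance.
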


The first part of the theorem gives that for $n$ in the range
\[
(k+1)2^{\lfloor\frac{d}{k+1}\rfloor}<n\leq k2^{\lfloor\frac{d}{k}\rfloor}
\]
we have an upper bound of $(1-\frac{1}{k}-o(1))\binom{n}{2}$. The second part of the theorem shows that this upper bound is asymptotically tight for large $n$ (for which we need large $d$ also) provided that $n$ is not too close to $(k+1)2^{\lfloor\frac{d}{k+1}\rfloor}$. Since the function $x2^{d/x}$ is decreasing for $0<x<d$ it is almost the case that given $n$ we have a unique $k$ with $n$ in a range of this form. However because of the rounding in the exponent this is not quite the case. For some values of $d,k$ we have $k2^{\lfloor\frac{d}{k}\rfloor}<(k+1)2^{\lfloor\frac{d}{k+1}\rfloor}$ from which it follows that some ranges of this form will be empty while some will overlap. Nevertheless the statement of the Theorem accounts for this. To get the best upper bound we should consider the largest $k$ for which $n\leq k2^{\lfloor\frac{d}{k}\rfloor}$.

We will need the following result of Erd\H os and Simonovits \cite{ES} on the minimum number of copies of $K_t$ in a graph of given edge density.

\begin{theorem}[(Supersaturation Theorem)]\label{supersat:thm}
For all $c>0$ there exists $c'>0$ such that every graph $G$ with $n$ vertices, $(1-\frac{1}{k}+c)\frac{n^2}{2}$ edges contains at least $c'n^{k+1}$ copies of $K_{k+1}$.
\end{theorem}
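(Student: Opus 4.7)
The plan is to prove the two parts separately. The lower bound in part one is a direct adaptation of the construction from Theorem \ref{turan:thm}, while the upper bound in part two combines the Supersaturation Theorem with the Helly property for subcubes and the bound $m(p)\leq r$ forced by being $K_{r+1}$-free.

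For part one, since $k\geq 2$ and $n\leq k2^{\lfloor d/k\rfloor}$, choose $k$ pairwise disjoint subsets $P_1,\dots,P_k$ of $[d]$ each of size $\lfloor d/k\rfloor$ and let $H$ be the intersection graph of the family $\{u\in C_d:F(u)=P_i\text{ for some }i\}$. As in the proof of Theorem \ref{turan:thm}, $H$ is complete $k$-partite with $2^{\lfloor d/k\rfloor}$ vertices in each class, hence contains $T_k(n)$ as a subgraph for every $n$ in the allowed range. Since $T_k(n)$ is $K_{k+1}$-free and $k\leq r$, it is also $K_{r+1}$-free, and it has $t_k(n)$ edges.

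For part two, suppose for contradiction that $G\in\I(n,d)$ is $K_{r+1}$-free with $e(G)>(1-\tfrac{1}{k}+\epsilon)\binom{n}{2}$. By Theorem \ref{supersat:thm} applied with $c=\epsilon$ (up to the harmless difference between $n^2/2$ and $\binom{n}{2}$), there exists $c'=c'(\epsilon)>0$ such that $G$ contains at least $c'n^{k+1}$ copies of $K_{k+1}$. Fix any such copy; the corresponding $k+1$ subcubes are pairwise intersecting, so by the Helly property they share a common point of $\{0,1\}^d$. For $p\in\{0,1\}^d$ write $m(p)$ for the number of subcubes in $V(G)$ that contain $p$. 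Every $K_{k+1}$ in $G$ has at least one such common point, and at a given point $p$ at most $\binom{m(p)}{k+1}$ copies of $K_{k+1}$ can arise this way, so
\[
c'n^{k+1}\;\leq\;\sum_{p\in\{0,1\}^d}\binom{m(p)}{k+1}.
\]
Because $G$ is $K_{r+1}$-free and the Helly property holds, no point of $\{0,1\}^d$ is contained in more than $r$ subcubes from $V(G)$; hence $m(p)\leq r$ and $\binom{m(p)}{k+1}\leq\binom{r}{k+1}$ for every $p$. Summing over the $2^d$ points gives
\[
c'n^{k+1}\;\leq\;2^d\binom{r}{k+1},
\]
i.e.\ $n\leq C\,2^{d/(k+1)}$ with $C=C(\epsilon,r,k)=\bigl(\binom{r}{k+1}/c'\bigr)^{1/(k+1)}$. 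Taking $\delta=1/C$ makes the hypothesis $\frac{1}{n}2^{d/(k+1)}<\delta$ directly contradict this bound, which completes the proof.

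The main obstacle is the counting step in the upper bound: one has to see that the Supersaturation Theorem produces copies of $K_{k+1}$ (not $K_{r+1}$, which would be forbidden), and that the Helly property converts each copy into a high-multiplicity point of $\{0,1\}^d$. Once this is set up, the cap $m(p)\leq r$ from being $K_{r+1}$-free turns the local inequality into a global bound on $n$ in terms of $2^{d/(k+1)}$, which is precisely the threshold appearing in the hypothesis.
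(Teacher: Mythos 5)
Your argument does not prove the stated theorem. The statement here is the Erd\H{o}s--Simonovits Supersaturation Theorem (Theorem~\ref{supersat:thm}), which the paper quotes from \cite{ES} without proof. What you have written is instead a proof of Theorem~\ref{smalln:thm}: a $k$-partite construction for the lower bound, and an upper bound combining the Helly property with a supersaturation count. Crucially, your ``part two'' begins by invoking Theorem~\ref{supersat:thm} itself (``By Theorem~\ref{supersat:thm} applied with $c=\epsilon$\dots''), so as a proof of the target statement it is circular; as written, it establishes a different result while assuming the one you were asked to prove.

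For what it is worth, your argument does track the paper's proof of Theorem~\ref{smalln:thm} closely, and your final quantitative step is actually cleaner than the paper's (which misplaces the exponent $\frac{1}{k+1}$ and writes $\binom{k}{r+1}$ for $\binom{r}{k+1}$). But a proof of the supersaturation statement itself requires a genuinely different, self-contained argument. The standard one runs as follows: choose $m$ large enough (depending on $c$ and $k$) that, by Tur\'an's theorem, every $m$-vertex graph with more than $\left(1-\frac{1}{k}+\frac{c}{2}\right)\binom{m}{2}$ edges contains a $K_{k+1}$; an averaging argument over all $\binom{n}{m}$ induced $m$-vertex subgraphs of $G$ shows that a positive proportion of them exceed this density and hence each contains a copy of $K_{k+1}$; since any fixed copy of $K_{k+1}$ lies in only $\binom{n-k-1}{m-k-1}$ of these subsets, dividing yields at least $c'n^{k+1}$ distinct copies. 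None of this appears in your write-up, so the key ideas needed for this theorem are missing.
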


In the proof below and elsewhere $\log$ denotes the base 2 logarithm.

\begin{proof}[of Theorem~\ref{smalln:thm}]
Take $t=\lfloor\log\frac{n}{k}\rfloor$. By the conditions of the theorem we have that $n\leq k2^{\lfloor d/k\rfloor}$ and so $tk\leq d$. It follows that we can take $k$ pairwise disjoint $t$-sets $P_1,\dots, P_k$ in $[d]$. Let $V$ be the collection of all subcubes $u$ with $F(u)=P_i$ for some $i=1,\dots,k$.
The corresponding intersection graph is the Tur\'an graph $T_k(k2^{\lfloor d/k\rfloor})$. By taking a suitable subgraph of this we obtain $T_k(n)$.

For the upper bound let $V$ be the vertex set of a $K_{r+1}$-free graph $G$ in $\I(n,d)$ and suppose that $e(G)>\left(1-\frac{1}{k}+\epsilon\right)\frac{n^2}{2}$. We have that each $x\in\{0,1\}^d$ is in at most $r$ subcubes in $V$. Further, by the Helly property, for each copy of $K_{k+1}$ there is some point of $\{0,1\}^d$ common to all the $k+1$ subcubes forming the $K_{k+1}$. Consequently the graph $G$ contains at most $\binom{r}{k+1}2^d$ copies of $K_{k+1}$ (the case $k=1$ of this is contained in Theorem \ref{absolute:thm}). 

The edge density of $G$ means that we can apply Theorem \ref{supersat:thm} to get that that the number of copies of $K_{k+1}$
in $G$ is at least $\epsilon'n^{k+1}$ for some $\epsilon'>0$. Hence $\epsilon'n^{k+1}\leq\binom{r}{k+1}2^d$ and so $\frac{1}{n}2^{\frac{d}{k+1}}\geq\frac{\epsilon'}{\binom{k}{r+1}}$. The result follows if we set $\delta=\frac{\epsilon'}{\binom{k}{r+1}}$.
\end{proof}

The above construction is clearly not exactly optimal. For instance if $k<r$ we may replace one subcube in $V$ with $(*,\dots,*)$ giving a graph with more edges (effectively we are replacing our balanced $k$-partite graph with a $(k+1)$-partite graph with one class containing a single vertex). If in addition $kt<d$ then we could replace $2^{d-kt}$ vertices with a $(k+1)$st class of $2^{d-kt}$ subcubes each of whose fixed set is $[d]\setminus(P_1\cup\dots\cup P_k)$. 

We suspect that when $n$ is between $(k+1) 2^{d/(k+1)}$ and $k2^{d/k}$ the extremal graph is an $r$-partite graph based on a partition of $[d]$ in which $k$ parts are large (around $\log \frac{n}{k}$) and $k-r$ are small. It is messy to calculate the exact number of edges but this informal description gives the idea. If $\frac{\log n}{d}$ is bounded away from $k+1$ then the small parts will have size $o(\log n)$ and we have a good asymptotic bound. However if $\frac{\log n}{d}$ is slightly larger than $k+1$, the small parts will be more significant and we do not have a good upper bound. Using more precise results on the minimum number of triangles and $K_4$s in a graph we can show more in the cases $r=3,4$. 

We will need the following result on the clique density problem due to Razborov (in the case $r=3$) \cite{Raz} and Nikiforov
\cite{N}. 

\begin{theorem}\label{RN:thm}
For $r=3,4$ if $G$ is a graph on $n$ vertices with $\alpha\binom{n}{2}$ edges where $\alpha\in\left[1-\frac{1}{t-1},1-\frac{1}{t}\right]$ for integer $t$ then the number of $K_r$ in $G$ is asymptotically at most the number of $K_r$ in the $t$-partite graph of edges density $\alpha$ with $t-1$ roughly equal parts and one smaller part.
\end{theorem}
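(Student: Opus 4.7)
The stated result is the asymptotic solution of the clique density problem: Razborov's theorem for $r=3$ and Nikiforov's for $r=4$. Both pin down the extremal number of $K_r$ in a graph of edge density $\alpha$ and identify the extremiser as the $t$-partite graph $H_t(\alpha)$ with $t-1$ roughly equal parts and one smaller part.

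My plan is first to compute the $K_r$-density of $H_t(\alpha)$ explicitly. Parametrising the smaller part by size $xn$ and the $t-1$ large parts by size $(1-x)n/(t-1)$ each, the edge-density constraint pins down $x$ as the root of a quadratic in $\alpha$; substituting into the standard $K_r$-counting formula for multipartite graphs yields a piecewise-algebraic target function against which the extremal inequality must be checked on the interval $\left[1-\tfrac{1}{t-1},1-\tfrac{1}{t}\right]$. This step is essentially elementary and serves to identify the quantity appearing on the right-hand side of the bound.

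For $r=3$, I would apply Razborov's flag algebra framework: work in the flag algebra of finite graphs, encode the edge and triangle densities of $G$ as flag densities, and establish the extremal inequality via a sum-of-flag-squares certificate. Verifying this certificate reduces to a semidefinite programming feasibility check at small flag size, which is the technical heart of Razborov's original argument. For $r=4$, I would follow Nikiforov's more classical route: an iterated weighted Cauchy--Schwarz inequality applied to edge-neighbourhood densities bootstraps the bound on $K_3$ into one on $K_4$, with the equality case forcing the extremal structure to be $t$-partite of the claimed type via a Zykov-style symmetrisation.

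The main obstacle is the SDP step in Razborov's argument: one must choose a basis of labelled flags and a positive-semidefinite coefficient matrix so that the resulting inequality is tight along $H_t(\alpha)$ across the whole interval rather than only at the Tur\'an endpoints $\alpha=1-\tfrac{1}{t}$. For $r=4$, the matching difficulty is that Nikiforov's inductive estimate must be shown to be uniform across the intervals of $\alpha$, and the associated stability conclusion must correctly recover the asymmetric $t$-partite structure with one distinguished smaller part rather than a balanced multipartite graph.
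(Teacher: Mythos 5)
This theorem is not proved in the paper at all: it is quoted as an external result, with the proof deferred entirely to Razborov \cite{Raz} (for $r=3$) and Nikiforov \cite{N} (for $r=4$). Your proposal correctly identifies the statement as the asymptotic clique density theorem and correctly names the two proof architectures used in those papers (the flag-algebra/semidefinite certificate for triangles, and Nikiforov's analytic bootstrapping for $K_4$). However, as a proof it is only a roadmap: the two items you yourself label as ``the main obstacle'' --- exhibiting a positive semidefinite flag certificate that is tight along the conjectured extremal family throughout each interval $\left[1-\frac{1}{t-1},1-\frac{1}{t}\right]$, and making Nikiforov's iterated Cauchy--Schwarz estimate uniform in $\alpha$ with the right stability conclusion --- are not side issues but the entire mathematical content of the theorem. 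Nothing in the proposal actually establishes the inequality; everything that is carried out (computing the $K_r$-density of the candidate extremiser $H_t(\alpha)$) only identifies the target value. In the context of this paper the honest and correct move is the one the authors make: cite \cite{Raz} and \cite{N} rather than reprove them.

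One further point you should have caught: the inequality in the statement of Theorem~\ref{RN:thm} is oriented the wrong way. The complete $t$-partite graph with $t-1$ roughly equal parts and one smaller part \emph{minimises} the number of $K_r$ among graphs of edge density $\alpha$; the maximum is governed by Kruskal--Katona-type bounds and is attained by a clique together with isolated vertices. The application in Theorem~\ref{smalln2:thm} (``any graph with strictly larger edge density must have a strictly larger density of copies of $K_r$'') requires precisely the ``at least'' direction, i.e.\ a lower bound on the $K_r$-count of an arbitrary graph of the given density. Your sketch, which sets up a sum-of-squares certificate bounding the triangle density from below, is implicitly proving the correct direction, but you reproduce the statement's ``at most'' without comment; a careful proof attempt should have flagged and corrected the direction of the inequality before trying to certify it.
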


Using this we get a better asymptotic result for $r=3,4$ which improves on our earlier upper bound when $n$ is a little larger than $r\times 2^{d/r}$.

\begin{theorem}\label{smalln2:thm}
For $r=3,4$ if $G$ is a $K_{r+1}$-free graph in $\I(n,d)$ where $n=(r-1)\times2^x+2^{d-(r-1)x}$ for some $d/r\leq x\leq d/(r-1)$ then the maximum number of edges in $G$ is attained asymptotically by the complete $r$-partite graph with $(r-1)$ parts of size $2^x$ and one part of size $2^{d-(r-1)x}$.
\end{theorem}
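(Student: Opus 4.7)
\medskip

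\noindent\textbf{Proof plan for Theorem~\ref{smalln2:thm}.} The construction is entirely analogous to those in Theorems~\ref{turan:thm} and~\ref{smalln:thm}: partition $[d]$ into sets $P_1,\dots,P_r$ with $|P_i|=x$ for $i\le r-1$ and $|P_r|=d-(r-1)x$ (rounding appropriately when $x$ is not an integer), and let $V$ be the family of all subcubes whose fixed coordinate set equals some $P_i$. Subcubes with the same fixed set are disjoint while subcubes with different fixed sets always intersect, so the intersection graph is complete $r$-partite with part sizes $2^x,\dots,2^x,2^{d-(r-1)x}$; this is $K_{r+1}$-free and has $n$ vertices.

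For the upper bound, let $G\in\I(n,d)$ be $K_{r+1}$-free with vertex set $V\subset C_d$ and edge density $\alpha=e(G)/\binom{n}{2}$. Each $p\in\{0,1\}^d$ lies in at most $r$ subcubes of $V$ (else those subcubes form a $K_{r+1}$ meeting at $p$), and by the Helly property every $K_r$ in $G$ has some common point $p$; the set of subcubes containing any given $p$ has size at most $r$ and hence contributes at most $\binom{r}{r}=1$ copy of $K_r$, so summing over $p$ shows $G$ has at most $2^d$ copies of $K_r$. Tur\'an's theorem gives $\alpha\le 1-\frac{1}{r}+o(1)$, and we may further restrict to $\alpha\ge 1-\frac{1}{r-1}$, since otherwise $e(G)\le\bigl(1-\frac{1}{r-1}\bigr)\binom{n}{2}$ is already no larger than the edge count of the construction (whose density lies in $\bigl[1-\frac{1}{r-1},1-\frac{1}{r}\bigr]$ as $x$ varies over $[d/r,d/(r-1)]$). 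For $\alpha$ in this range, Theorem~\ref{RN:thm} (with $t=r$) tells us that $G$ has at least as many copies of $K_r$ as the complete $r$-partite graph on $n$ vertices of density $\alpha$ with $r-1$ equal parts of size $y$ and one part of size $z\le y$ satisfying $(r-1)y+z=n$; this extremal graph has exactly $y^{r-1}z$ copies of $K_r$, so we obtain $y^{r-1}z\le(1+o(1))2^d$.

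Therefore $e(G)$ is bounded asymptotically by the maximum of $\binom{n}{2}-(r-1)\binom{y}{2}-\binom{z}{2}$ (equivalently the minimum of $(r-1)y^2+z^2$) over pairs $(y,z)$ with $(r-1)y+z=n$ and $y^{r-1}z\le 2^d$. The unconstrained minimum at $y=z=n/r$ has $y^{r-1}z=(n/r)^r$, which exceeds $2^d$ for all $n\ge r\cdot 2^{d/r}$, so the constraint $y^{r-1}z=2^d$ is active throughout the range in the statement. Setting $z=n-(r-1)y$ turns $y^{r-1}z=2^d$ into a polynomial equation in $y$ with exactly two admissible roots; a short derivative comparison (the objective $(r-1)y^2+z^2$ is decreasing in $y$ for $y<z$ and increasing for $y>z$ under the linear constraint) shows that the root with $y\ge z$ yields the smaller value, and this root is precisely $(y,z)=(2^x,2^{d-(r-1)x})$ for $x\in[d/r,d/(r-1)]$, matching the construction.

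The main technical hurdle is that Theorem~\ref{RN:thm} is asymptotic, so the bound $y^{r-1}z\le(1+o(1))2^d$ and the resulting edge bound hold only up to $o\bigl(\binom{n}{2}\bigr)$ corrections; together with the integer rounding of $2^x$ and $2^{d-(r-1)x}$ in the construction, this is exactly why the statement can only yield the extremal edge count asymptotically. Neither complication affects the one-dimensional optimization itself, which is elementary.
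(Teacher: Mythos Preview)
Your proof uses the same two ingredients as the paper's---the bound of $2^d$ on the number of copies of $K_r$ in a $K_{r+1}$-free $G\in\I(n,d)$, and Theorem~\ref{RN:thm} to convert this into an edge bound---but you take a longer route than necessary. The paper simply observes that the construction itself is (asymptotically) the Razborov--Nikiforov extremal graph at its own density $\alpha_0$, and that it contains exactly $2^d$ copies of $K_r$; since the Razborov--Nikiforov lower bound on $K_r$-count is strictly increasing in $\alpha$ on the interval $[1-\tfrac{1}{r-1},1-\tfrac{1}{r}]$, any graph of density strictly exceeding $\alpha_0$ would be forced to contain strictly more than $2^d$ copies of $K_r$, which is impossible. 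This immediately gives $\alpha\le\alpha_0+o(1)$ without any optimisation.

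Your detour through the constrained optimisation is correct in outline but contains a small gap. The derivative observation you cite only tells you that on the two feasible intervals $[0,y_1]$ and $[y_2,n/(r-1)]$ the objective is minimised at $y_1$ and $y_2$ respectively; it does \emph{not} by itself compare $f(y_1)$ with $f(y_2)$. Since $f(y)=r(r-1)(y-n/r)^2+n^2/r$ is a parabola centred at $n/r$, what you actually need is that $y_2-n/r\le n/r-y_1$, i.e.\ that $g(y)=y^{r-1}(n-(r-1)y)$ falls off from its maximum at $y=n/r$ faster to the right than to the left. This is true (for instance, writing $y=n/r\pm s$ and comparing the resulting expressions shows the positive-$s$ branch reaches any given level at a smaller $|s|$), but it requires a short additional argument. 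Once you notice that the construction already sits at the Razborov--Nikiforov extremum, this entire comparison becomes unnecessary.
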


\begin{proof}
Note first that the required graph is indeed in $\I(n,d)$; it is the $r$-partite graph based on a partition of $[d]$ into $(r-1)$ parts of size $x$ and one of size $d-(r-1)x$.

This graph contains $2^d$ copies of $K_r$ which we know is the maximum number possible in a $K_{r+1}$ free graph in $\I(n,d)$. Now by Theorem \ref{RN:thm} any graph on $n$ vertices with a strictly larger edge density must have a strictly larger density of copies of $K_r$. It follows that no such graph can be in $\I(n,d)$ and so this graph is asymptotically best possible.
\end{proof}

An answer to the clique density problem valid for all $r$ would allow us to extend the previous result to all $r$. However even this would only work for specific values of $n$, all of them in the range a little larger than $r2^{d/r}$. 

When $n=2\times2^{d/2}$ the construction of Theorem \ref{smalln:thm} gives a complete bipartite graph which has edge density $1/2$. By the second part of the same theorem this is asymptotically optimal. To get slightly more edges for $n$ up to $2\times2^{d/2}+(r-2)$ we could take $G$ to be a subgraph of the $r$-partite graph based on a partition of $[d]$ into 2 $d/2$-sets and $(r-2)$ copies of $\emptyset$. This explains the range of $n$ for which we make Conjecture \ref{smalln:con}.

Notice that as soon as $n$ becomes asymptotically larger than $2^{d/2}$ the upper bound of Lemma \ref{absolute:thm} implies that we cannot have a positive proportion of edges in a $K_{r+1}$-free graph. This suggests that determining the extremal number of edges is likely to be more delicate than in the smaller $n$ region and indeed we have no better upper bound than $\binom{r}{2}2^d$. In this large $n$ range complete $r$-partite constructions based on partitions of $[d]$ do not give good lower bounds. Instead we consider constructions more like that used in the proof of Lemma \ref{absolute:thm}.

It is possible to construct a $K_{r+1}$-free graph in $\I(n,d)$ in a similar way to the constructions based on partitions of $[d]$ but based on \emph{any} subsets of $[d]$. Specifically, for some $k\leq r$ let $R_1,\dots,R_k$ be subsets of $[d]$. Now take all elements of $C_d$ whose fixed set is some $R_i$. This graph will be $k$-partite and so $K_{r+1}$-free. However, if the $R_i$ are not pairwise disjoint then it will not be complete $k$-partite. The number of edges between the classes corresponding to $R_i$ and $R_j$ is $2^{|R_i\cup R_j|}$ (there are $2^{|R_i|}$ vertices in the $i$th class and each is adjacent to $2^{|R_j\setminus R_i|}$ vertices in the $j$th class). If $k<r$ it will always be better to add a new class based on the empty set (this will consist of the singleton vertex corresponding to the subcube $(*,\dots,*)$ which will be joined to every other vertex). Since this increases the number of vertices by one we should then delete any vertex. So provided we are allowed to consider subgraphs of graphs constructed like this we may as well take $k=r$ (as we did in Conjecture \ref{smalln:con}).

\begin{theorem}\label{largen:thm}
If $n\geq k2^{d-\lceil\frac{d}{k}\rceil}$ with $2\leq k\leq r$ an integer then there is a $K_{r+1}$-free graph in $\I(n,d)$ with $\binom{k}{2}2^d$ edges.

\end{theorem}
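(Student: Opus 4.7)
The plan is to imitate and generalize the construction used in the proof of Theorem~\ref{absolute:thm}, which handled the special case $k=r$, producing a complete $k$-partite intersection graph of subcubes. Such a graph is $K_{k+1}$-free and hence $K_{r+1}$-free, and we can arrange each pair of classes to contribute the maximum possible $2^d$ cross-class edges, giving the required total of $\binom{k}{2}2^d$.

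Concretely, I would choose a partition $[d]=P_1\cup\dots\cup P_k$ into pairwise disjoint parts whose sizes are as balanced as possible (so that each $|P_i|\in\{\lfloor d/k\rfloor,\lceil d/k\rceil\}$), and take as vertex set
\[
V=\{u\in C_d:F(u)=[d]\setminus P_i\text{ for some }i\in[k]\},
\]
grouped into classes $V_i=\{u\in V:F(u)=[d]\setminus P_i\}$ of size $2^{d-|P_i|}$. Within a class all subcubes share the same fixed set but differ somewhere on $[d]\setminus P_i$, so they are pairwise disjoint in $\{0,1\}^d$, making each $V_i$ an independent set of the intersection graph.

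For $u\in V_i$ and $v\in V_j$ with $i\ne j$, the disjointness of $P_i$ and $P_j$ forces $F(u)\cup F(v)=[d]$, so $u$ and $v$ intersect in at most a single point of $\{0,1\}^d$; conversely, each point of $\{0,1\}^d$ lies in a unique member of $V_i$ and a unique member of $V_j$, yielding exactly one cross-class edge. Hence the bipartite subgraph between $V_i$ and $V_j$ has exactly $2^d$ edges, and summing over the $\binom{k}{2}$ pairs of classes gives $\binom{k}{2}2^d$ edges in total. The graph is $k$-partite and therefore $K_{r+1}$-free.

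The base vertex count of this construction is $n^*=\sum_{i=1}^k 2^{d-|P_i|}$, minimized (for this shape) by the balanced partition. To finish, I would extend from $n^*$ up to any larger admissible $n$ by applying Lemma~\ref{increase:lem} repeatedly, each step either duplicating a singleton subcube (when every vertex has codimension $d$) or splitting a subcube with a $*$-coordinate into its two children; this preserves $K_{r+1}$-freeness and does not decrease the edge count, so we obtain a $K_{r+1}$-free graph in $\I(n,d)$ with the same $\binom{k}{2}2^d$ edges for every $n^*\le n\le r2^d$. The main point that needs verification is that the balanced partition yields $n^*$ at most the threshold $k2^{d-\lceil d/k\rceil}$ claimed in the theorem; this is immediate when $k\mid d$ (since then every $|P_i|=d/k$ exactly) and is the only place where the case $k\nmid d$ requires a careful elementary estimate.
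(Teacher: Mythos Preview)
Your construction is essentially the same as the paper's: choose pairwise disjoint sets $P_1,\dots,P_k\subseteq[d]$, set $R_i=[d]\setminus P_i$, and take all subcubes with fixed set some $R_i$, obtaining a $k$-partite graph with $\binom{k}{2}2^d$ edges. The paper chooses all $|P_i|$ equal to $t=\lfloor d-\log(n/k)\rfloor$ (depending on $n$) rather than a balanced partition of $[d]$, and does not explicitly invoke Lemma~\ref{increase:lem} to adjust the vertex count, but the idea is the same.

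There is, however, a genuine gap in your final step. The inequality you defer as ``a careful elementary estimate'', namely $n^*\le k\,2^{\,d-\lceil d/k\rceil}$, is \emph{false} whenever $k\nmid d$. For instance with $d=5$ and $k=2$ the balanced partition has parts of sizes $2$ and $3$, giving $n^*=2^{3}+2^{2}=12$, while the stated threshold is $2\cdot 2^{5-3}=8$. Indeed the theorem as printed cannot hold at that boundary: one would need a $K_3$-free graph on $8$ vertices with $\binom{2}{2}2^5=32$ edges, but $\binom{8}{2}=28$. The intended threshold is evidently $k\,2^{\,d-\lfloor d/k\rfloor}$ (floor, not ceiling); with that correction your argument goes through cleanly, since each class has size $2^{d-|P_i|}\le 2^{d-\lfloor d/k\rfloor}$ and hence $n^*\le k\,2^{\,d-\lfloor d/k\rfloor}$, after which Lemma~\ref{increase:lem} extends to all larger $n$ as you describe. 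The paper's own proof (its claim that $tk\le d$) likewise only checks out against the floor threshold.
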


Note that the although the function $x2^{d-d/x}$ is increasing in $x$ the rounding once again makes things a little complicated. For the best bound we should take the largest $k$ for which the inequality $n\geq k2^{d-\lceil\frac{d}{k}\rceil}$ holds.

\begin{proof}
Take $t=\lfloor d-\log\frac{n}{k}\rfloor$. By the conditions of the theorem we have that $tk\leq d$. It follows that we can take $k$ pairwise disjoint $t$-sets $P_1,\dots, P_k$ in $[d]$. Now set $R_i=[d]\setminus P_i$ and let $V$ be the collection of all subcubes $u$ with $F(u)=R_i$ for some $i=1,\dots,k$. This graph is of the form described above and so it is $K_{r+1}$-free and has $\binom{k}{2}2^d$ edges since $|R_i\cup R_j|=d$ for all $i\not=j$.
\end{proof}

As we remarked above this bound can be improved slightly by introducing $r-k$ new classes with $P_i=\emptyset$ and deleting $r-k$ vertices from the original graph. However this will only give a small improvement. We conjecture that a construction of this form is optimal.

\begin{conj}\label{largen:con}
For any $n\geq 2\times2^{d/2}+(r-2)$ the maximum number of edges in a $K_{r+1}$-free graph in $\I(n,d)$ is attained by a graph $G$ which is a subgraph of a graph $H$ of the form  
\[
V(H)=\{x\in C_d: F(x)=R_i\}
\]
where if $P_i=[d]\setminus R_i$ then $P_1,\dots,P_k$ is a partition of $[d]$ for some $k$ and $P_{k+1},\dots,P_r=[d]$.
\end{conj}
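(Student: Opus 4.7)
The plan is to proceed by stability followed by a two-stage symmetrization: first force each color class to share a common fixed-coordinate set, then force those fixed sets to come from a pairwise-disjoint family. Throughout, the guiding identity is the Helly-based point count from the proof of Theorem~\ref{absolute:thm}: writing $c(p)$ for the number of subcubes in $V$ that contain $p\in\{0,1\}^d$, one has $c(p)\leq r$ and
\[
e(G)\leq \sum_{p\in\{0,1\}^d}\binom{c(p)}{2}\leq \binom{r}{2}2^d,
\]
with equality requiring every point to lie in exactly $r$ subcubes and every intersecting pair to meet in a single point. This ties $e(G)$ to the distribution of $c(p)$ and to the sizes of pairwise intersections, and is the source of the subsequent structural constraints.

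\textbf{Step 1 (chromatic reduction).} First use a stability form of Theorem~\ref{supersat:thm} to argue that a near-extremal $G$ is close to $r$-partite: any $K_r$ in $G$ corresponds, by the Helly property, to a point covered by at least $r$ subcubes, so too many copies of $K_r$ force too few edges relative to the budget $\binom{r}{2}2^d$. Pass to an $r$-partition $V=C_1\cup\dots\cup C_r$ (some parts possibly empty) without losing edges, so we may assume $G$ itself is $r$-partite.

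\textbf{Step 2 (within-class uniformity).} The key claim is that each non-empty part $C_i$ can be replaced by a collection of subcubes sharing a single fixed-coordinate set $R_i$, without decreasing $e(G)$. The strategy is a compression: given $u,u'\in C_i$ with $F(u)\neq F(u')$, replace $u'$ by a subcube $\tilde u$ obtained by aligning certain $*$-coordinates of $u'$ with those of $u$; since $C_i$ is independent no $K_{r+1}$ is created within $C_i$, and one aims to show the expected number of edges to $\bigcup_{j\neq i}C_j$ does not decrease. A plausible implementation uses the Helly identity from the opening: heterogeneity of fixed sets in $C_i$ creates points $p$ with $c(p)<r$ that could otherwise contribute to $\sum_p\binom{c(p)}{2}$, and the compression converts this slack into additional cross-class intersections. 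Iterating yields a common $R_i$ per class.

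\textbf{Step 3 (between-class alignment).} Once each $C_i$ uses a common $R_i$, a direct count gives $e(C_i,C_j)=2^{|R_i\cup R_j|}$, so $e(G)=\sum_{i<j}2^{|R_i\cup R_j|}$ subject to $|C_i|\leq 2^{|R_i|}$ and $\sum_i |C_i|=n$. Setting $P_i=[d]\setminus R_i$, the sum is maximized when $|R_i\cup R_j|=d$ for each pair, i.e.\ when the $P_i$ are pairwise disjoint. A monotone rearrangement — moving a coordinate $\ell\in P_i\cap P_j$ out of whichever of $P_i,P_j$ contributes less edge-weight, and rebalancing $|C_i|,|C_j|$ to stay within the new $2^{|R_i|}$ bound — should not decrease $e(G)$ under the hypothesis $n\geq 2\cdot 2^{d/2}+(r-2)$, since this lower bound on $n$ is precisely what ensures the rebalancing is feasible. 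The $P_i$ that end up non-empty form a partition of $[d]\setminus \bigcap_j P_j$; any residual parts are absorbed into $r-k$ copies of the all-$*$ subcube (i.e.\ $P_i=[d]$), giving the conjectured form.

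The dominant difficulty is \textbf{Step 2}. Standard coordinate-wise shifts on the Boolean lattice do not obviously preserve both the intersection pattern with other classes and $K_{r+1}$-freeness, so one likely needs a simultaneous compression across several classes, or an entropy/averaging argument on the joint distribution of $(c(p),p)$ that measures "wasted coverage" directly. A further subtlety is the threshold $n\geq 2\cdot 2^{d/2}+(r-2)$: just below it, the extremal graph transitions to the Tur\'an-type construction of Conjecture~\ref{smalln:con}, and the symmetrization argument must be verified to produce the correct extremal structure on either side of this transition.
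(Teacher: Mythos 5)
This statement is posed in the paper as Conjecture~\ref{largen:con}; the authors give no proof, and your proposal does not close the gap --- it is a plan with several steps that are asserted rather than established. The most serious problem is Step~1. In the range $n\geq 2\times 2^{d/2}+(r-2)$ the absolute bound $e(G)\leq\binom{r}{2}2^d$ of Theorem~\ref{absolute:thm} forces the edge density of any $K_{r+1}$-free graph in $\I(n,d)$ to tend to $0$ as soon as $n$ grows faster than $2^{d/2}$ (the paper points this out explicitly just before Theorem~\ref{largen:thm}). The Erd\H{o}s--Simonovits supersaturation theorem, and any stability version of it, is a positive-density statement: it says nothing about graphs with $o(n^2)$ edges, so your ``chromatic reduction'' is vacuous on essentially the whole range of the conjecture. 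Even at the boundary, where the density is about $1/2$, abstract graph stability only gives closeness to $r$-partite in edit distance; it does not let you pass to an exactly $r$-partite graph that is still realizable as a subcube intersection graph in $\I(n,d)$ with at least as many edges --- the class $\I(n,d)$ is not closed under edge edits, and this is precisely where the difficulty of the problem lives.

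Step~2 you concede is unproven: the compression is never defined, and there is no argument that aligning fixed sets within a class preserves realizability, preserves $K_{r+1}$-freeness globally, and does not destroy cross-class edges. Step~3 also has a gap: the identity $e(C_i,C_j)=2^{|R_i\cup R_j|}$ holds only when each class consists of \emph{all} $2^{|R_i|}$ subcubes with fixed set $R_i$; for partial classes the count depends on which subcubes are chosen, so the claimed exchange/rebalancing argument is not justified as stated. In short, the statement remains a conjecture, and your outline identifies plausible ingredients but does not supply the key lemma (a symmetrization valid inside $\I(n,d)$ at vanishing edge density) that a proof would require.
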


\section{Tur\'an problems with arbitrary finite $X$}

As we mentioned earlier, in most work on intersection graphs there is some underlying structure on the ground set. However some problems are also natural to consider for arbitrary ground sets. One example is an early result of Erd\H os, Goodman and P\'osa \cite{EGP} that any graph on $n$ vertices is an intersection graph of subsets of a $\lfloor\frac{n^2}{4}\rfloor$-set.
In this section we digress briefly to consider our Tur\'an type problem for an arbitrary ground set. Specifically we consider the situation where rather than subcubes of $\{0,1\}^d$ we allow arbitrary subsets of a finite set. This results in some natural questions in extremal set theory which we answer almost completely. These results may be of independent interest as well as being a contast to the subcube results. 

Suppose that $X$ is an $m$-set and $A_1,\dots,A_n$ are any subsets of $X$. The Tur\'an type problem now splits into two separate questions both of which we address here. We can ask either for the maximum number of edges in the corresponding intersection graph under the assumption that it is $K_{r+1}$-free or under the assumption that no element of $X$ is contained in more than $r$ of the $A_i$. For subcubes these are equivalent because of the Helly property but for arbitrary subsets they are not.

\begin{theorem}\label{general1:thm}
Let $G$ be an $n$-vertex intersection graph with ground set $X$ of size $m$. If $G$ is $K_{r+1}$-free then $n\leq rm$ and  
\[
e(G)\leq\min\{t_r(n),\binom{r}{2}m\}.
\]
Moreover, if $\sqrt{m}$ is a prime power and $r\leq\sqrt{m}+1$ then this bound is sharp for all $n$.
\end{theorem}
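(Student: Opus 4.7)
\emph{Upper bounds.} If $G$ is $K_{r+1}$-free then no $x\in X$ can lie in more than $r$ of the $A_i$: otherwise those $r+1$ sets pairwise intersect at $x$ and span a $K_{r+1}$. Assuming without loss of generality that each $A_i\neq\emptyset$ (empty sets contribute only isolated vertices), double counting incidences gives $n\le\sum_i|A_i|=\sum_{x\in X}d(x)\le rm$. Each edge $ij$ admits a witness $x\in A_i\cap A_j$, and each $x$ witnesses at most $\binom{d(x)}{2}\le\binom{r}{2}$ pairs, so $e(G)\le\binom{r}{2}m$; Tur\'an's theorem gives $e(G)\le t_r(n)$.

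\emph{Sharp construction, small range.} Let $X$ be the point set of the affine plane $AG(2,q)$ with $q=\sqrt{m}$, and choose $r$ parallel classes $\mathcal{P}_1,\ldots,\mathcal{P}_r$ (possible since $r\le q+1$); recall that non-parallel lines of $AG(2,q)$ meet in exactly one point. For $n\le r\sqrt{m}$ take $a_i$ lines from $\mathcal{P}_i$ with $\sum_i a_i=n$ and the $a_i$ as balanced as possible; parallel lines are disjoint and non-parallel lines intersect, so the intersection graph is the Tur\'an graph $T_r(n)$, achieving $t_r(n)$ edges, which equals $\min\{t_r(n),\binom{r}{2}m\}$ throughout this range.

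\emph{Sharp construction, large range.} For $r\sqrt{m}\le n\le rm$ the target is the constant $\binom{r}{2}m$. Begin with $r$ copies of each singleton $\{x\}$, $x\in X$: then $n=rm$, every point has degree exactly $r$, the intersection graph is $m$ disjoint copies of $K_r$, and $e(G)=m\binom{r}{2}$. To decrease $n$ iterate the following \emph{line substitution}: pick a line $L$ of some $\mathcal{P}_i$ not used in any previous substitution, a subset $L'\subseteq L$ of size $s\ge 2$, remove one copy of $\{x\}$ for each $x\in L'$, and add the single set $L'$. This changes $n$ by $1-s$, keeps every point's degree at $r$ (each $x\in L'$ loses a singleton but gains $L'$), and preserves $|A_i\cap A_j|\le 1$ for distinct pairs (subsets of distinct lines inherit this from the plane, and each line is used at most once). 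Consequently $e(G)=\sum_x\binom{d(x)}{2}=m\binom{r}{2}$ is maintained. Each substitution reduces $n$ by some $s-1\in\{1,\ldots,q-1\}$ and $rq$ lines are available, so the cumulative reduction ranges over $[0,rq(q-1)]=[0,r(m-q)]$, realising every $n\in[rq,rm]$. The resulting graph is $K_{r+1}$-free: any $r+1$ pairwise intersecting sets that include a singleton $\{x\}$ must all contain $x$, forcing $d(x)\ge r+1$; any $r+1$ pairwise intersecting partial lines come from pairwise non-parallel original lines, impossible with only $r$ parallel classes.

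\emph{Main obstacle.} The delicate point is the range $r\sqrt{m}<n<rm$: neither pure affine-plane lines nor pure singletons interpolate smoothly, and the partial-line substitution---which simultaneously preserves every point's degree at exactly $r$, keeps all pairwise intersections of size at most one, and thereby freezes the edge count at exactly $\binom{r}{2}m$---is the essential technical device.
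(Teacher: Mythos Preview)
Your argument is correct. The upper bounds and the base construction at $n=r\sqrt{m}$ are the same as the paper's: your affine-plane parallel classes are precisely the rows, columns, and Latin-square symbol classes the paper uses (this is the standard equivalence between $AG(2,q)$ and a full set of MOLS of order $q$), and for $n\le r\sqrt{m}$ both proofs simply take a balanced subfamily to realise $T_r(n)$.

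Where you diverge is the interpolation for $r\sqrt{m}<n\le rm$. You start at the top, with $r$ copies of each singleton, and merge downward via your partial-line substitutions, carefully maintaining $d(x)=r$ and $|A_i\cap A_j|\le 1$ so that $e(G)=\sum_x\binom{d(x)}{2}=\binom{r}{2}m$ stays frozen. The paper instead starts at $n=r\sqrt{m}$ and moves upward by the much simpler device of repeatedly replacing any non-singleton $A_i$ by a partition of it into two nonempty pieces: this cannot increase the clique number (any clique using one piece lifts to a clique using $A_i$), cannot decrease the edge count, and hence---since $\binom{r}{2}m$ is already an upper bound---keeps $e(G)$ pinned at $\binom{r}{2}m$. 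Your substitution mechanism works, but it is more elaborate than necessary; the paper's splitting move handles the entire large range in one line without tracking degrees or pairwise-intersection sizes.
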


\begin{proof}
If will suffice to find sets $A_1,\dots, A_n$ whose intersection graph $G$ has number of edges equal to both $t_r(n)$ and $\binom{r}{2}m$. For smaller $n$ we can take subgraphs of this. For larger $n$ we can repeatedly replace some non-singleton $A_i$ with a partition of it into two non-empty sets. This operation does not decrease the number of edges and so the graph formed still has $\binom{r}{2}m$ edges and so attains the upper bound. Note that eventually we have $r$ copies of each singleton in $X$ and this is plainly the maximum number of vertices we can have without a $K_{r+1}$.
 
Since $\sqrt m=q$ is a prime power and $r\leq q+1$ we can find $r-2$ mutually orthogonal latin squares of order $q$ (see section 5.2 of \cite{DK} for instance). Let $X=\{(i,j):1\leq i,j\leq q\}$. Take $R_x=\{(i,j):i=x\}$, $C_y=\{(i,j):j=y\}$ and 
\[
S_x^k=\{(i,j): \text{ the $k$th Latin square has symbol $x$ in position }(i,j)\}.
\]
Now the intersection graph of the sets 
\[
R_1,\dots,R_q,C_1,\dots,C_q,S_1^1,\dots,S_q^1,\dots,S_1^{r-2},\dots,S_q^{r-2}
\]
is the Tur\'an graph $T_r(rq)$. This graph has $\binom{r}{2}q^2=\binom{r}{2}m$ edges and so both bounds are attained.
\end{proof}

\begin{theorem}\label{general2:thm}
Let $A_1,\dots,A_n$ be subsets of an $m$-set $X$ and let $G$ be the associated intersection graph. If no element of $X$ is contained in more than $r$ of the $A_i$ then $n\leq rm$ and:
\[
e(G)\leq\min\{\binom{n}{2},\binom{r}{2}m\}.
\]
Moreover, given $\epsilon>0$ and $m\geq m_0(r,\epsilon)$ then this bound is sharp for all $n<r\sqrt{m}(1-\epsilon)$ and all $rm\geq n>r\sqrt{m}(1+\epsilon)$.
\end{theorem}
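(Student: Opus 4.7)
The plan is to separate the easy upper bounds from the two sharpness constructions.

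\textbf{Upper bounds.} Assuming all $A_i$ are nonempty, $n \le \sum_i |A_i| = \sum_{x \in X} d(x) \le rm$, where $d(x) = |\{i : x \in A_i\}|$. The bound $e(G) \le \binom{n}{2}$ is trivial. For $e(G) \le \binom{r}{2}m$, I would associate to each intersecting pair $\{A_i, A_j\}$ a witness $x \in A_i \cap A_j$; since $x$ witnesses at most $\binom{d(x)}{2} \le \binom{r}{2}$ pairs, summing over $x$ gives the bound.

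\textbf{Sharpness for $n > r\sqrt{m}(1+\epsilon)$.} I would start from the transversal design $TD(r,q)$ of Theorem~\ref{general1:thm}: $rq$ blocks on $q^2$ points in $r$ parallel classes, each point in exactly $r$ blocks, cross-class pairs meeting in one point, within-class pairs disjoint; this contributes $\binom{r}{2}q^2$ intersecting pairs. For each point $y \in X$ not in the $TD$ base I would add $r$ copies of $\{y\}$; this raises every point's degree to $r$ and contributes $\binom{r}{2}(m-q^2)$ extra intersecting pairs (one $\binom{r}{2}$ inside each group of $r$ equal singletons), totalling $\binom{r}{2}m$ edges. The resulting block count $r(q + m - q^2)$ sweeps, as $q$ varies over prime powers in $[1, \sqrt{m}]$, from $rm$ down to $r\sqrt{m}$; for $m \ge m_0(r, \epsilon)$ prime powers are dense enough that $q$ may be chosen so this count lies just below $n$. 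To reach $n$ exactly, I iterate block splitting: replace any block of size $\ge 2$ by a two-part partition. A short calculation shows each split adds one vertex while preserving the degree bound and the edge count, since each external intersection of the parent is inherited by exactly one child and the two children are themselves disjoint.

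\textbf{Sharpness for $n < r\sqrt{m}(1-\epsilon)$.} Here I construct $n$ pairwise intersecting sets of degree $\le r$. Beginning again with $TD(r, q)$ for a prime power $q$ somewhat below $\sqrt{m}$ handles cross-class pairs. To force within-class pairs to intersect, I add auxiliary points: for each parallel class $\mathcal{P}_k$ (of $q$ blocks) fix a covering of its $\binom{q}{2}$ block-pairs by subsets of size $\le r$, and for every covering subset $S$ introduce a new point lying in precisely the blocks of $S$. Each auxiliary point has degree $|S| \le r$, original points keep degree $r$, and every pair of blocks now intersects at some original or auxiliary point. The auxiliary points total at most $r\lceil q(q-1)/(r(r-1))\rceil = O(q^2/r)$; for $m \ge m_0(r, \epsilon)$ we may choose $q$ so that $q^2 + O(q^2/r) \le m$ and $rq \ge n$, and take any $n$ of the $rq$ blocks to obtain $e(G) = \binom{n}{2}$.

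\textbf{Main obstacle.} The small-$n$ construction is the delicate step, since it needs an efficient pair-covering of $q$ by $r$-subsets together with a fit of all auxiliary points inside $m - q^2$. Exact Steiner systems $S(2,r,q)$ exist only for restricted $q$, so a near-optimal covering must be used; the $(1-\epsilon)$ slack combined with $m \ge m_0(r, \epsilon)$ absorbs this inefficiency together with rounding from the choice of prime power $q$.
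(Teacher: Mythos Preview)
Your upper bounds are fine and match the paper's. The sharpness constructions, however, both have genuine quantitative gaps that the $\epsilon$-slack does \emph{not} absorb.

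\textbf{Large $n$.} With $q$ the largest prime power at most $\sqrt{m}$ you obtain a block count of $r(q+m-q^2)$, and you need this to be at most $n$. For $n$ just above $r\sqrt m(1+\epsilon)$ this forces $q^2-q\ge m-\sqrt m(1+\epsilon)$, i.e.\ $q\ge \sqrt m-\epsilon/2$. Thus you need a prime power in the interval $[\sqrt m-\epsilon/2,\sqrt m]$, which for any fixed $\epsilon<2$ has length below $1$ and will often contain no integer at all, let alone a prime power. Concretely, for $m=10^{10}$ the largest prime $q\le 10^5$ is $99991$, giving $m-q^2\approx 1.8\times 10^6$ and a block count of roughly $19\,r\sqrt m$; no amount of splitting gets you down to, say, $n=1.5\,r\sqrt m$. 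So ``prime powers are dense enough'' is simply false at the precision required.

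\textbf{Small $n$.} Your auxiliary-point scheme needs, beyond the $q^2$ points of the $TD$, about $r\binom{q}{2}/\binom{r}{2}=q(q-1)/(r-1)$ further points to cover the within-class pairs. Hence the total point count is $(1+o(1))\,q^2\cdot\frac{r}{r-1}$, a fixed constant-factor blow-up. Requiring this to sit inside $m$ while also having $rq\ge n=r\sqrt m(1-\epsilon)$ forces $(1-\epsilon)^2\le (r-1)/r$, i.e.\ essentially $\epsilon\ge 1/(2r)$. For smaller $\epsilon$ the construction cannot fit, and no choice of $q$ or of ``near-optimal'' within-class covering fixes this, because the loss comes from the very decision to treat cross-class and within-class pairs separately via a $TD$.

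\textbf{What the paper does instead.} The paper bypasses both difficulties with a single duality trick. Choose $r$-subsets $S_1,\dots,S_m$ of $[n]$ and set $A_i=\{a:i\in S_a\}$; then each point $a\in[m]$ lies in exactly $|S_a|=r$ of the $A_i$, and $A_i\cap A_j\neq\emptyset$ iff some $S_a$ contains $\{i,j\}$. For small $n$ take the $S_a$ to be a R\"odl covering of all pairs of $[n]$ (possible with $m$ sets once $n<r\sqrt m(1-\epsilon)$), yielding $e(G)=\binom{n}{2}$. For large $n$ take the $S_a$ to be a R\"odl packing (possible with $m$ sets once $n>r\sqrt m(1+\epsilon)$), yielding $e(G)=m\binom{r}{2}$. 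This uses the $\epsilon$-slack exactly once, to pass from an asymptotic design to an exact one, rather than spending it on prime-power gaps or on a structural overhead of order $1/r$.
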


\begin{proof}
Plainly $e(G)\leq\binom{n}{2}$. The argument of Lemma \ref{absolute:thm} applies in this case also to show that $e(G)\leq\binom{r}{2}m$.

For the lower bound if $n<r\sqrt{m}(1-\epsilon)$ take a collection $S_1,\dots,S_m$ of $r$-subsets of $[n]$ with the property that every pair in $[n]^{(2)}$ is contained in at least one $S_i$. We can do this if $n=r\sqrt{m}(1-\epsilon)$ and $m$ is sufficiently large by R\"odl's proof of the Erd\H os-Hanani conjecture \cite{Rodl}. It is now easy to adapt such a collection of sets for smaller $n$. Now let $A_i=\{a: i\in S_a\}$ and consider the family of sets $A_1,\dots,A_n\subseteq[m]$. Since $|S_a|=r$ for all $a$, every element of $[m]$ is contained in only $r$ of the $A_i$. Now given $i,j\in[n]$ we have that there is some $x\in[m]$ with $i,j\in S_x$ and so $x\in A_i\cap A_j$. In particular $A_i\cap A_j\not=\emptyset$ for all $i,j$ and so $e(G)=\binom{n}{2}$ which is $\min\{\binom{n}{2},\binom{r}{2}m\}$ for this range of $n$.

If $n>r\sqrt{m}(1+\epsilon)$ we can similarly take a collection $S_1,\dots,S_m$ of $r$-subsets of $[n]$ with the property that every pair in $[n]^{(2)}$ is contained in at most one $S_i$. Again let $A_i=\{a: i\in S_a\}$ and consider the family of sets $A_1,\dots,A_n\subseteq[m]$. Since $|S_a|=r$ for all $a$, every element of $[m]$ is contained in only $r$ of the $A_i$. Since no $i,j\in[n]$ are contained in more than one of the $A_i$ is follows that $|A_i\cap A_j|\leq 1$ for all $i,j$ and so $e(G)=\binom{r}{2}m$ which is $\min\{\binom{n}{2},\binom{r}{2}m\}$ for this range of $n$.
\end{proof}

Note that this argument shows that if $n$ is such that there is a collection of $r$-sets containing every pair in $[n]^{(2)}$ exactly once then the bound of Theorem \ref{general2:thm} is sharp when $m=\frac{\binom{n}{2}}{\binom{r}{2}}$.

\section{Ramsey problems}

The theme of Ramsey theory is that in any colouring of a sufficiently large structure we are guaranteed to find monochromatic copies of a given small structure. The classic example is Ramsey's theorem which concerns finding monochromatic complete graphs in colourings of large complete graphs. Specifically the Ramsey number $R(k,l)$ in the minimum $n$ for which every colouring of $E(K_n)$ with red and blue contains either a red $K_k$ or a blue $K_l$ (or equivalently every graph on $n$ vertices contains either a $K_k$ or a $\overline{K}_l$). Ramsey's Theorem is that $R(k,l)$ exists for all $k,l$. However, finding good bounds for Ramsey numbers is a notorious problem. See chapter 6 of \cite{MGT} for basic results in Ramsey theory and \cite{GRS} for a fuller account.

We make the following definition of the analogue of Ramsey numbers in the subcube intersection graph context:
\[
R_d(k,l)=\min\{n: \text{ every } G\in\I(n,d) \text{ contains either a $K_k$ or a } \overline{K}_l\}.
\]

Note that unlike the usual Ramsey number this definition is not symmetric in $k$ and $l$ (since $G\in\I(n,d)$ does not imply that $G^c\in\I(n,d)$). Hence the diagonal cases $R_d(k,k)$ are perhaps not the most natural. Rather we mainly consider the situation when $l$ is fixed and $k$ is arbitrary (even the case $l=3$ is interesting). This relates to the theorem of Berg et al on $(k,m)$-agreeable societies.
No graph in $\I(n,d)$ has an independent set of size larger than $2^d$. It follows that if $l>2^d$ we need only consider the restriction that $G$ is $K_k$-free and then trivially $R(k,l)=(k-1)2^d$. It makes sense therefore to assume that $l\leq 2^d$. 

Clearly $R_d(k,l)\leq R(k,l)$ and if $d$ is large we would expect this to be close to an equality. Indeed, any $d$ vertex graph can be expressed as the intersection graph of subcubes of $\{0,1\}^d$ (it is easy to prove this by induction on $d$ or see Tuza \cite{T} for a slightly stronger result). From this it follows that if $d\geq R(k,l)$ then $R_d(k,l)=R(k,l)$.

Taking $(k-1)$ copies of each of $(l-1)$ singletons we have the trivial lower bound $R_d(k,l)\geq(k-1)(l-1)+1$. This is clearly tight when $k=2$ and turns out to also be tight for $d=2$ (see appendix). For $d$ which are larger but not as large as $R(k,l)$ the situation is much more interesting. 

For $l=3$ we have the following result.

\begin{theorem}\label{Rk3:thm}
\[
d\lfloor\frac{k}{c\sqrt{d\log d}}\rfloor<R_d(k,3)<\frac{2d}{\log d-\log\log d}k
\]
For some absolute constant $c>0$.
\end{theorem}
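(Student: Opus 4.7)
My plan for $R_d(k,3)<2dk/(\log d-\log\log d)$ is to induct on $d$. Suppose for contradiction that $G\in\I(n,d)$ with $n=\lceil 2dk/(\log d-\log\log d)\rceil$ has neither a $K_k$ nor a $\overline{K}_3$. As in the proof of Theorem~\ref{absolute:thm}, the absence of $K_k$ together with the Helly property forces each point of $\{0,1\}^d$ to lie in at most $k-1$ of the subcubes, and so $\sum_i 2^{-|F(A_i)|}\le k-1$. Jensen's inequality then yields $\sum_i|F(A_i)|\ge n\log(n/(k-1))$, and double counting over coordinates produces some $\ell^*\in[d]$ fixed by at least $n\log(n/(k-1))/d$ subcubes; for our choice of $n$ this is at least $2k$. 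Write $S_a=\{i:\ell^*\in F(A_i),\ A_i|_{\ell^*}=a\}$ and assume without loss of generality that $|S_0|\ge k$.

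If $|S_1|\ge 1$, the $|S_0|\ge k$ subcubes in $S_0$ cannot pairwise intersect (Helly would then give a $K_k$), so some $u,v\in S_0$ are disjoint; any $w\in S_1$ conflicts with both $u$ and $v$ at coordinate $\ell^*$, producing a $\overline{K}_3$, a contradiction. If $|S_1|=0$, then every subcube either fixes $\ell^*$ to $0$ or leaves $\ell^*$ free, so projecting out coordinate $\ell^*$ yields an intersection-isomorphic copy of $G$ in $\I(n,d-1)$. Since $d/(\log d-\log\log d)$ is increasing in $d$ (for $d$ at least moderate), the same $n$ still exceeds the threshold for dimension $d-1$, and the inductive hypothesis finishes the argument. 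The base case for small $d$ is covered by the trivial bound $R_d(k,3)\le R(k,3)=O(k^2/\log k)$.

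\textbf{Lower bound.} For $R_d(k,3)>d\lfloor k/(c\sqrt{d\log d})\rfloor$, I plan to construct $n=dm$ subcubes (with $m=\lfloor k/(c\sqrt{d\log d})\rfloor$) whose intersection graph is both $K_k$-free and $\overline{K}_3$-free, via the alteration method. Choose $n_0=2n$ independent uniform random subcubes of codimension $\kappa$ to be tuned. The expected number of $K_k$-cliques is at most $\binom{n_0}{k}\cdot 2^d\cdot 2^{-\kappa k}$ (union bound over points, using Helly to identify a $K_k$ with a $k$-wise common point), while the expected number of pairwise disjoint triples is of order $n_0^3 q^3$, where $q=\Pr[\text{two random codim-}\kappa\text{ subcubes are disjoint}]=1-\mathbb{E}[2^{-|F\cap F'|}]\approx \kappa^2/d$ for moderate $\kappa$. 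Choosing $\kappa$ of order $\max\{d/k,\sqrt{d/\log d}\}$ balances both expectations below $n_0/4$ in the relevant parameter regimes, so deleting one vertex from each bad substructure leaves at least $n$ vertices forming the required graph.

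\textbf{Main obstacle.} The upper bound should come out cleanly once the projection trick in the $|S_1|=0$ case is in place---the induction on $d$ via a heavily-used coordinate is the key observation. The harder part is the lower bound: one must carefully control the correlation between the pairwise disjointness events used in estimating $\mathbb{E}[\overline{K}_3]$ (the simple $q^3$ estimate ignores that a single conflicting coordinate can be shared across pairs), and one must verify that the chosen $\kappa$ works uniformly across all $k$, particularly near the boundary $k\sim d/(\log d)^{1/4}$ where the alteration margin is slimmest and a more delicate construction (or a random-greedy refinement akin to the triangle-free process) may be needed.
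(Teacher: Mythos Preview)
Your upper bound argument is correct and genuinely different from the paper's. The paper picks a disjoint pair $A_1,A_2$ with $A_1$ of maximal codimension $t$, observes that everything missing $A_1$ forms a clique of size $<k$, and projects the rest onto $A_1$ to get $R_d(k,3)<k+R_{d-t}(k,3)$; it then plays this off against the averaging bound $n<2^t k$ and optimises over $t$. Your route---Jensen on $\sum 2^{-|F(A_i)|}\le k-1$ to find a coordinate fixed by $\ge 2k$ subcubes, then either producing a $\overline{K}_3$ or deleting that coordinate---is arguably cleaner, and yields the same constant. One small repair: your stated base case via $R_d(k,3)\le R(k,3)=O(k^2/\log k)$ fails when $k\gg d$. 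Use instead $d=2$, where $R_2(k,3)=2k-1<4k=2\cdot 2k/(\log 2-\log\log 2)$; monotonicity of $d/(\log d-\log\log d)$ for $d\ge 2$ then carries the induction.

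Your lower bound, however, has a real gap. With $n\asymp k\sqrt{d/\log d}$ and codimension $\kappa$, the $K_k$ alteration forces $\kappa\gtrsim d/k+\tfrac12\log d$, while the $\overline{K}_3$ alteration (even granting the optimistic estimate $\Pr[\text{pairwise disjoint triple}]\approx q^3$ with $q\approx \kappa^2/d$) forces $n q\ll n^{1/3}$, i.e.\ $\kappa\ll d^{1/3}(\log d)^{1/6}/k^{1/3}$. These two constraints are incompatible across the full range of $k$; in particular your suggested $\kappa=\max\{d/k,\sqrt{d/\log d}\}$ does not make $\mathbb{E}[\#\overline{K}_3]<n_0/4$. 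Pushing this through via a triangle-free-process refinement would amount to reproving Kim's theorem inside $\I(n,d)$. The paper sidesteps all of this: since every $d$-vertex graph can be realised in $\I(d,d)$, one takes (by Kim) a $K_x$-free, $\overline{K}_3$-free graph on $d$ vertices with $x\asymp\sqrt{d\log d}$, represents it by subcubes, and replaces each subcube by $\lfloor k/x\rfloor$ identical copies. The blow-up has no $\overline{K}_3$ and no $K_{x\lfloor k/x\rfloor}$, giving $R_d(k,3)>d\lfloor k/(c\sqrt{d\log d})\rfloor$ immediately from the ordinary Ramsey lower bound.
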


For comparison the usual Ramsey number satisfies:

\begin{theorem}[(Kim and Ajtai-Koml\'os-Szemer\'edi)]
\[
c_1\frac{k^2}{\log k}<R(k,3)<c_2\frac{k^2}{\log k}
\]
For some absolute constants $c_1,c_2>0$.
\end{theorem}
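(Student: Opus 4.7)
The plan is to prove the two inequalities separately, as they require rather different techniques.

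For the upper bound, I start with two structural constraints that any family $\mathcal{F}$ of $n$ subcubes realising a graph $G$ with $\omega(G) < k$ and $\alpha(G) < 3$ must satisfy. First, the no-$\overline{K}_3$ hypothesis forces, for each $u \in \mathcal{F}$, the subcubes of $\mathcal{F}$ disjoint from $u$ to pairwise intersect, so by the Helly property they share a common point through which therefore at most $k-1$ members of $\mathcal{F}$ pass; hence $\delta(G) \geq n-k$. Combined with the pointwise covering estimate $\deg_G(u) \leq (k-2)2^{\dim u}$ this forces every subcube to have dimension at least $\log((n-k)/(k-2))$, i.e.\ essentially $\log(n/k)$. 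Second, any three distinct subcubes with the same fixed coordinate set are pairwise disjoint, so every $F \subseteq [d]$ is the fixed set of at most two members of $\mathcal{F}$.

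Next I'd pass to the triangle-free complement $G^c$. For each coordinate $i$, partition the subcubes fixed on $i$ into the two value-classes $A_i^0, A_i^1$; since every disjoint pair is witnessed by some coordinate $i$ where one subcube lies in $A_i^0$ and the other in $A_i^1$, the number of disjoint pairs is bounded above by $\sum_i |A_i^0||A_i^1|$. Combining this with the degree bound $|E(G^c)| \leq n(k-1)/2$ coming from the first step, and with the dimension lower bound (which, by convexity of $2^x$ against $\sum_u 2^{\dim u} \leq (k-1)2^d$, also constrains the $f_i = |A_i^0|+|A_i^1|$), the relevant optimisation produces the required inequality $n < \frac{2dk}{\log d - \log\log d}$ once $n/k$ is calibrated so that $\log(n/k) \approx \log(d/\log d)$.

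For the lower bound $R_d(k,3) > d\lfloor k/(c\sqrt{d\log d})\rfloor$, I'd exhibit a family of roughly $k\sqrt{d/\log d}$ subcubes whose intersection graph is both $K_k$-free and $\overline{K}_3$-free. The construction follows the template of Kim's semi-random lower bound for $R(k,3)$: one first produces a $(K_{k'}, \overline{K}_3)$-free graph $H$ on $\Theta((k')^2/\log k')$ vertices by the Ajtai--Koml\'os--Szemer\'edi triangle-free process, and then realises $H$ as a subcube intersection graph in $\{0,1\}^d$ by encoding each vertex as a carefully chosen subcube (using the fact that the complement of such $H$, being triangle-free and sparse, admits a biclique cover of moderate size). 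Matching the parameter $k'$ of $H$ to the available dimension budget $d$ yields $|V(H)| = \Theta(k\sqrt{d/\log d})$.

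The main obstacle, as I see it, lies in the upper bound: making the composition of the dimension bound, the Helly-clique degree bound and the per-coordinate conflict counting sufficiently sharp to extract the precise factor $\log d - \log\log d$; a naive balancing yields only $n = O(k\,2^{d/2})$, which is far weaker than the claim once $d$ is large. For the lower bound the delicate point is controlling the independence number: a uniformly random family of subcubes produces $\alpha = \Theta(\log n)$ rather than the required $\alpha \leq 2$, so a structured semi-random deletion argument is needed.
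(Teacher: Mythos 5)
The statement you were asked about is the classical theorem of Kim and of Ajtai--Koml\'os--Szemer\'edi that the \emph{ordinary} Ramsey number satisfies $R(k,3)=\Theta(k^2/\log k)$. In the paper this is quoted purely for comparison with Theorem~\ref{Rk3:thm}; it is a deep external result (the upper bound from \cite{AKS}, the lower bound from Kim's semi-random construction in \cite{Kim}) and the paper gives no proof of it. Your proposal does not engage with this statement at all: everything you write --- the Helly property, the per-coordinate conflict counting, the target inequality $n<\frac{2dk}{\log d-\log\log d}$, the lower bound $d\lfloor k/(c\sqrt{d\log d})\rfloor$ --- concerns the subcube Ramsey number $R_d(k,3)$ of Theorem~\ref{Rk3:thm}, which is a different quantity. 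So as a proof of the stated theorem the proposal is off-target, and to the extent that your lower-bound sketch invokes ``Kim's semi-random lower bound for $R(k,3)$'' as an ingredient, it would be circular if read as a proof of the statement itself.

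Even read charitably as an attempt at Theorem~\ref{Rk3:thm}, the argument has real gaps. For the upper bound you concede that your balancing only yields $n=O(k2^{d/2})$; the paper instead obtains the factor $\frac{d}{\log d-\log\log d}$ by an induction on dimension: pick a disjoint pair $A_1,A_2$ with $A_1$ of maximal codimension $t$ among subcubes occurring in disjoint pairs, observe that the subcubes missing $A_1$ form a clique (hence at most $k-1$ of them) while those meeting $A_1$ have the same intersection graph as their projections into $A_1$, giving $R_d(k,3)<k+R_{d-t}(k,3)$; this is combined with the pigeonhole bound $n<2^tk$ by iterating until the relevant codimension drops below $\alpha=\log d-\log\log d$. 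For the lower bound, your plan to realise a $(K_{k'},\overline{K}_3)$-free graph $H$ on $\Theta(k\sqrt{d/\log d})$ vertices as subcubes of $\{0,1\}^d$ requires the biclique cover number of $H^c$ to be at most $d$, which you do not establish and which is doubtful when $|V(H)|\gg d$; the paper sidesteps this by taking $H$ on only $d$ vertices (every $d$-vertex graph lies in $\I(d,d)$) and then replacing each vertex by $\lfloor k/x\rfloor$ copies of its subcube, which preserves $\overline{K}_l$-freeness and keeps the clique number below $k$.
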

with the lower bound due to Kim \cite{Kim} and the upper bound due to Ajtai, Koml\'os and Szemer\'edi \cite{AKS}.

Our lower bound is $0$ when $k<c\sqrt{d\log d}$ but in this range we are in the large $d$ situation described above when $R_d(k,3)=R(k,3)$. If $k\gg\sqrt{d\log d}$ then we obtain $R_d(k,l)>(1+o(1))c\sqrt{\frac{d}{\log d}}k$. As we mentioned above $R(k,l)$ is always an upper bound for $R_d(k,l)$ and if $d$ is much larger than $k$ then this will be smaller than the upper bound given.

For general $l$ the best bounds we have are:

\begin{theorem}\label{Rkl:thm}
\[
d\lfloor\frac{k}{c_ld^{\frac{2}{l+1}}\log d}\rfloor<R_d(k,l)< 2d^{l-2}k
\]
where $c_l>0$ depends on $l$ but not on $d$ and $k$.
\end{theorem}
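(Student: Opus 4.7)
The two bounds are proved by separate arguments.

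\emph{Upper bound $R_d(k,l) < 2d^{l-2}k$.} We argue by induction on $l$, with the base case $l = 3$ supplied by Theorem~\ref{Rk3:thm} (which gives $R_d(k,3) \leq 2dk$ once $d$ is large enough that $\log d - \log\log d \geq 1$). For the inductive step, let $\mathcal{A}$ be a family of $n = 2d^{l-2}k$ subcubes whose intersection graph $G$ is $K_k$-free; we must find $l$ pairwise disjoint members of $\mathcal{A}$. No $A \in \mathcal{A}$ can be the whole cube $\{0,1\}^d$ (otherwise $A$ would be joined to all other vertices and produce a $K_k$), so $|F(A)| \geq 1$ for every $A$, and with $X_i^b = \{A \in \mathcal{A} : A_i = b\}$ we have
\[
\sum_{i \in [d]} \bigl(|X_i^0| + |X_i^1|\bigr) \;=\; \sum_{A \in \mathcal{A}} |F(A)| \;\geq\; n.
\]
Averaging over the $2d$ pairs $(i,b)$ produces one with $|X_i^b| \geq n/(2d) = d^{l-3}k$, which (after absorbing a constant factor into the choice of $n$) meets the threshold $2d^{l-3}k$ required by the inductive hypothesis. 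Applying the hypothesis to the $K_k$-free subfamily $X_i^b$ yields $l-1$ pairwise disjoint subcubes $A_1, \ldots, A_{l-1} \in X_i^b$, and any subcube in $X_i^{1-b}$ disagrees with each $A_j$ at coordinate $i$ and so completes an $l$-tuple of pairwise disjoint subcubes. In the degenerate case $X_i^{1-b} = \emptyset$, coordinate $i$ takes only the values $b$ and $*$ across $\mathcal{A}$, so projecting out coordinate $i$ yields an intersection-preserving embedding of $\mathcal{A}$ into $\I(n,d-1)$, and we invoke induction on $d$.

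\emph{Lower bound.} Set $m = \lfloor k/(c_l d^{2/(l+1)} \log d)\rfloor$. We construct a family of $dm$ subcubes whose intersection graph $G$ has no $K_k$ and no $\overline{K_l}$. Applied with $a = \lceil k/d \rceil$, Spencer's classical lower bound $R(a,l) = \Omega_l\bigl(a^{(l+1)/2}/(\log a)^{(l-1)/2}\bigr)$ provides a graph $H$ on $m$ vertices with $\omega(H) < k/d$ and $\alpha(H) < l$. We partition the vertex set of $G$ into $d$ blocks of size $m$, one indexed by each coordinate $i \in [d]$: the block indexed by $i$ uses coordinate $i$ (together with a uniform structural convention shared across all blocks) to force every cross-block pair to intersect in a common point of $\{0,1\}^d$, while the remaining $d-1$ coordinates are used to realize $H$ as a subcube intersection graph inside the block. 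Any independent set in $G$ is then confined to a single block and so has size at most $\alpha(H) < l$; any clique meets each block in at most $\omega(H)$ vertices, giving $\omega(G) < d \cdot (k/d) = k$.

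The main obstacle is on the lower-bound side: realizing the Ramsey graph $H$ inside a single block using only the $d-1$ available coordinates, while \emph{simultaneously} arranging the index-coordinate structure so that every cross-block pair of subcubes meets, is a delicate encoding problem. This is the step where the bound on $m$ (and hence the $\log d$ factor) is actually determined, combining the logarithmic term in Spencer's theorem with the overhead in this encoding. The upper bound is comparatively routine, the only subtlety being the factor $2$ in the averaging step, which may be recovered either by a slightly finer averaging argument or by absorbing a constant into the induction parameter.
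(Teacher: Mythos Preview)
Your averaging-and-induction scheme is a genuine alternative to the paper's argument, but as written it does not give the constant $2$. From $n=2d^{l-2}k$ you obtain $|X_i^b|\ge n/(2d)=d^{l-3}k$, whereas the inductive hypothesis for $l-1$ in dimension $d-1$ requires $2(d-1)^{l-3}k$; the inequality $d^{l-3}\ge 2(d-1)^{l-3}$ fails for all but the smallest $d$. Tracking the recursion $f(d,l)\ge 2d\cdot f(d-1,l-1)$ honestly yields $R_d(k,l)<(2d)^{l-2}k$, i.e.\ you lose an extra factor $2^{l-3}$. There is also a small slip: a single whole-cube subcube does not by itself force a $K_k$; you need the extra step that its removal leaves a $K_{k-1}$-free family, or else simply discard the at most $k-1$ whole-cube copies before averaging. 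The paper's route is different: it fixes an independent set $A_1,\dots,A_{l-1}$, partitions the remaining subcubes by the first $A_r$ they meet, and projects onto $A_r$ to obtain $R_d(k,l)\le\sum_{r=1}^{l-1}R_{d-1}(k,l-r+1)$, which inducts cleanly to $2d^{l-2}k$.

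\textbf{Lower bound.} Here your plan has a structural gap that is not just a ``delicate encoding problem''. You want each of the $d$ blocks to realise the Ramsey graph $H$ on $m$ vertices as a subcube intersection graph using the $d-1$ coordinates left over after the index coordinate. But realising an arbitrary $m$-vertex graph as a subcube intersection graph requires on the order of $m$ coordinates, so this forces $m\le d-1$; your construction then has at most $d(d-1)$ vertices regardless of $k$, which is useless once $k$ is large. Moreover, any attempt to share the $H$-coordinates across blocks destroys the ``all cross-block pairs intersect'' property, while using disjoint coordinate sets for the blocks makes the budget even tighter. The paper's construction (Lemma~\ref{Ramseylb:lem}) reverses the roles: it takes a Ramsey graph $G'$ on $d$ vertices with no $K_x$ and no $\overline{K_l}$ (where $x\approx c_l d^{2/(l+1)}\log d$ so that $R(x,l)>d$), realises $G'$ once in $Q_d$, and then blows up each vertex into $\lfloor k/x\rfloor$ identical copies. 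The Ramsey structure lives \emph{between} blocks, not inside them, and the within-block graph is trivially a clique; this is exactly why the number of vertices scales with $k$.
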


Similarly to Theorem \ref{Rk3:thm} the lower bound is $0$ if $k<c_ld^{\frac{2}{l+1}}\log d$ while for $k\gg d^{\frac{2}{l+1}}\log d$ it is essentially $c_l\frac{d^{\frac{l-1}{l+1}}}{\log d}k$.

Obviously it would be interesting to improve these bounds and those of Theorem \ref{Rk3:thm}. We have no conjecture as to what the true values should be.

Interestingly, direct use of random methods does not seem to be helpful for constructing good lower bounds for $R_d(k,l)$. Instead the lower bounds in both cases involve graphs built from lower bounds for ordinary Ramsey numbers (of course these do typically involve random methods). We will use the fact mentioned above that any graph $G$ on $d$ vertices is the intersection graph of subcubes in $\{0,1\}^d$.

\begin{lemma}\label{Ramseylb:lem}
If $R(x,l)>d$ then $R_d(k,l)>d\lfloor\frac{k}{x}\rfloor$.
\end{lemma}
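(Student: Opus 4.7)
The plan is to prove the lower bound by constructing an explicit graph in $\I(d\lfloor k/x\rfloor, d)$ that avoids both $K_k$ and $\overline{K_l}$, thereby showing $R_d(k,l) > d\lfloor k/x\rfloor$. The construction proceeds by blowing up a Ramsey-extremal graph on $d$ vertices.

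First, since $R(x,l) > d$, there is a graph $H$ on $d$ vertices with $\omega(H) \leq x-1$ and $\alpha(H) \leq l-1$. By the fact cited in the paragraph preceding the lemma (any graph on $d$ vertices can be realised as the intersection graph of subcubes of $\{0,1\}^d$), we may fix subcubes $u_1,\dots,u_d \in C_d$ whose intersection graph is $H$. Now let $m = \lfloor k/x\rfloor$ and form the vertex multiset consisting of $m$ copies of each $u_i$. This multiset has size $dm = d\lfloor k/x\rfloor$ and, taken as a multiset of subcubes, determines a graph $G \in \I(dm, d)$.

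Next I would identify $G$ as the clique blow-up $H[m]$ of $H$: two identical copies of $u_i$ trivially intersect (so each blow-up class is a clique of size $m$), while a copy of $u_i$ and a copy of $u_j$ with $i \neq j$ are adjacent in $G$ iff $u_i \cap u_j \neq \emptyset$, i.e.\ iff $ij \in E(H)$. It then remains to verify the two Ramsey conditions. A clique in $H[m]$ consists of all $m$ copies of each vertex in a clique of $H$, so
\[
\omega(G) = m \cdot \omega(H) \leq (x-1)\lfloor k/x\rfloor \leq (x-1)\tfrac{k}{x} = k - \tfrac{k}{x} < k,
\]
and an independent set in $H[m]$ contains at most one vertex from each blow-up class, so $\alpha(G) = \alpha(H) \leq l-1 < l$. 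Hence $G$ contains neither $K_k$ nor $\overline{K_l}$, proving $R_d(k,l) > dm$.

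There is no real obstacle here — the only point to be careful about is that the strict inequality $m\omega(H) < k$ really does follow from $m \leq k/x$ and $\omega(H) \leq x-1$, which is the routine calculation above. The core idea is just that the subcube representation is closed under multiplicity (duplicating subcubes only adds twin vertices forming a clique), so any Ramsey-type construction on $d$ vertices can be lifted to one on $d\lfloor k/x\rfloor$ vertices by blowing up into cliques.
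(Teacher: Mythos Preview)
Your proof is correct and follows essentially the same approach as the paper's: both take a Ramsey-extremal graph on $d$ vertices, represent it as a subcube intersection graph in $\{0,1\}^d$, and then replace each subcube by $\lfloor k/x\rfloor$ identical copies to obtain the desired blow-up. Your verification that $\omega(G)=(x-1)\lfloor k/x\rfloor<k$ is slightly more explicit than the paper's (which simply notes $G$ has no $K_{x\lfloor k/x\rfloor}$ and hence no $K_k$), but the argument is the same.
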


\begin{proof}
Since $R(x,l)>d$ there is a graph $G'$ on $d$ vertices with no $K_x$ and no $\overline{K}_l$. We can find subcubes $A_1,\dots,A_d$ in $Q_d$ whose intersection graph is $G'$. Now take $\lfloor\frac{k}{x}\rfloor$ copies of each $A_i$ and let $G$ be the intersection graph of these. Clearly $G\in\I(d\lfloor\frac{k}{x}\rfloor,d)$. The graph $G$ has no $\overline{K}_l$ since $G'$ has no $\overline{K}_l$. Every maximal clique in $G$ corresponds to taking all copies of $A_i$ for $i\in U$ where $U$ is the vertex set of a clique in $G'$. Hence $G$ contains no $K_{x\lfloor\frac{k}{x}\rfloor}$ and hence no $K_k$.
\end{proof}

This Lemma gives a simple to state relation between ordinary Ramsey lower bounds and lower bounds for $R_d(k,l)$. To improve readability in what follows we will generally content ourselves with using easy to work with lower bounds for $R(k,l)$ even if they are not quite the best known. However using stronger bounds in the Lemma (the strongest for fixed $l$ are due to Bohman and Keevash \cite{BK}) will in turn give stronger bounds for $R_d(k,l)$.

\begin{proof}[Proof of Theorem \ref{Rk3:thm}]
For the lower bound we use the fact that $R(x,3)>c_1 \frac{x^2}{\log x}$ and so $R(c_2\sqrt{d\log d},3)>d$ for a suitable constant $c_2$. It follows from Lemma \ref{Ramseylb:lem} that 
\[
R_d(k,3)>d\lfloor\frac{k}{c_3\sqrt{d\log d}}\rfloor.
\]

We now prove the upper bound. Suppose that we have a graph $G\in\I(n,d)$ which has no $K_k$ and no $\overline{K}_3$ and let $A_1,\dots,A_n$ be subcubes whose intersection graph is $G$. Suppose without loss of generality that $A_1\cap A_2=\emptyset$ and $\dim(A_1)=d-t$ where $t$ is the largest codimension of a subcube which is involved in a disjoint pair of subcubes. We will bound $n$ by considering subcubes in $V(G)$ according to whether or not they meet $A_1$.

Since $G$ does not contain a copy of $\overline{K}_3$ any subcube in $V(G)$ which does not intersect $A_1$ must intersect $A_2$. Moreover, any two subcubes in $V(G)$ which intersect $A_2$ but not $A_1$ are intersecting. Hence the set of subcubes in $V(G)$ which do not intersect $A_1$ forms a clique in $G$. It follows that there are at most $k-1$ of them.  

For notational convenience suppose that $F(A_1)=\{t+1,\dots,d\}$. Suppose that for every subcube $x$ which intersects $A_1$ we define the projection $\pi(x)$ to be the subcube of $\{0,1\}^t$ with $\pi(x)_i=x_i$ for $1\leq i\leq t$. Plainly if $\pi(A_i)\cap\pi(A_j)\not=\emptyset$ then $A_i\cap A_j\not=\emptyset$. Moreover since $A_i$ and $A_j$ both intersect $A_1$ we have that if $A_i\cap A_j\not=\emptyset$ then $\pi(A_i)\cap\pi(A_j)\not=\emptyset$. It follows that the intersection graph of those subcubes which intersect $A_1$ is the same as the intersection graph of their projections. Therefore the number of subcubes in $V(G)$ which intersect $A_1$ is at most $R_{d-t}(k,3)$. Putting these two bounds together we obtain:
\[
R_d(k,3)<k+R_{d-t}(k,3).
\]

A second bound comes from considering the sizes of the subcubes making up $G$. Under our assumptions, we have $m$ subcubes each with dimension at least $d-t$ and $n-m$ other subcubes each of which intersects every other subcube. Hence some point of $\{0,1\}^d$ is contained in at least $m2^{d-t}/2^d+(n-m)\geq n/2^t$ subcubes and so $n<2^tk$. Roughly speaking we will use this bound if $t$ is small and the inductive bound if $t$ is large. 

Specifically, fix $\alpha$ (we will choose $\alpha$ later to optimise the bound) and apply the inductive bound repeatedly working within a sequence of cubes $Q_d,Q_{d-t_0},Q_{d-t_0-t_1},\dots$ until the largest codimension of our subcubes (regarded as subcubes of $Q_{d-t_0-\dots-t_i}$) is less than $\alpha$. We now have that
\[
R_d(k,3)<ik+2^{\alpha}k.
\]
Since each step at which we applied the inductive bound involved moving to a cube of codimension at least $\alpha$ this can take at most $d/\alpha$ steps and so
\[
R_d(k,3)<\left(\frac{d}{\alpha}+2^{\alpha}\right)k.
\]
It remains to choose $\alpha$ to optimise this bound. There is no nice expression for such $\alpha$ but substituting $\alpha=\log d-\log\log d$ (which is certainly close to the minimum) gives
\[
R_d(k,3)<2\frac{d}{\log d-\log\log d}k.
\]
\end{proof}

A similar approach to the inductive bound gives the upper bound for general $R_d(k,l)$.

\begin{proof}[Proof of Theorem \ref{Rkl:thm}]
For the lower bound Lemma \ref{Ramseylb:lem} and the fact that $R(x,l)>c_l\left(\frac{x}{\log x}\right)^{\frac{l+1}{2}}$ (\cite{Spencer}) gives that $R(c_ld^{\frac{2}{l+1}}\log d,l)>d$. Hence $R_d(k,l)>c_l\frac{d^{\frac{l-1}{l+1}}}{\log d}k$.

For the upper bound we will use induction on $d$. The upper bound certainly holds for $d=1$. For larger $d$ take $G\in\I(n,d)$ with no $K_k$ and no $\overline{K}_l$. Suppose that $A_1,\dots,A_{l-1}$ are pairwise disjoint subcubes in $V(G)$ and define
\[
n_r=|\{x\in V(G): x\cap A_r\not=\emptyset, x\cap A_i=\emptyset \text{ for } 1\leq i<r\}|.
\]
The intersection graph induced by these $n_r$ subcubes is the same as the intersection graph of their projections onto $A_r$. Hence $n_r\leq R_{\dim(A_r)}(k,l-r+1)$. Every subcube meets one of the $A_1,\dots,A_{l-1}$ and so $n=\sum_{r=1}^{l-1} n_r$. So
\[
R(k,l)\leq\sum_{r=1}^{l-1}R_{\dim(A_r)}(k,l-r+1)\leq\sum_{r=1}^{l-1}R_{d-1}(k,l-r+1)
\]
($\dim(A_i)\not=d$ since $A_i$ is disjoint from some other subcubes.)

By the induction hypothesis
\[
R_d(k,l)<2k\sum_{i=1}^{l-1}(d-1)^{i-1}=2k\left(\frac{(d-1)^{l-1}-1}{d-2}\right)
\]
provided that $d>2$ (if $d=2$ the sum is $2k(l-1)\leq 2^{l-2}2k$).

It will suffice then to show that for $d>2$
\[
(d-1)^{l-1}-1\leq(d-2)d^{l-2}
\]
This is easily checked to be true (with equality) if $l=3$. If $l>3$ then the inequality follows from the fact that $(d-1)^3\leq (d-2)d^2$.
\end{proof}

\section{Further Questions}

We raised earlier the questions of determining more precisely the maximum number of edges in a $K_{r+1}$-free graph in $\I(n,d)$ and giving better bounds on the modified Ramsey numbers $R_d(k,l)$. In addition to these we have some related questions and directions for further study which we believe may be worthwhile although we do not even have preliminary results for them.

Firstly, as we saw in the upper bound for $R_d(k,3)$ (Theorem \ref{Rk3:thm}), knowing the dimensions of the subcubes involved constrains the intersection graph considerably. Motivated by this let $\I(n,d,r)$ be the set of all graphs on $n$ vertices which are the intersection graphs of $r$-dimensional subcubes in $\{0,1\}^d$. What can be said about the properties of graphs in $\I(n,d,r)$? Note that in the agreement model this is the situation where each individual has an opinion on exactly $d-r$ of the issues. It would also be natural (both mathematically and in the agreement model sense) to restrict the allowed dimensions to be in some range. For instance to have dimension at most $r$ or at least $r$. Both the Tur\'an and Ramsey type problems make sense with this extra parameter $r$. We could also asked for the smallest $r$ such that every $n$ vertex graph is in $\I(n,d,r)$ for some $d$. An answer to this would complement the result of Tuza \cite{T} mentioned earlier on the smallest $d$ such that every $n$ vertex graph is in $\I(n,d)$.

Also motivated by the agreement model it may be interesting to weaken slightly the notion of intersection to some measure of closeness. In $\{0,1\}^d$ the appropriate notion for this is Hamming distance which for $x,y\in\{0,1\}^d$ is defined by $d_H(x,y)=|\{i:x_i\not= y_i\}$. A sample question then would be:

\begin{q}
Given $A_1,\dots,A_n$ subcubes in $\{0,1\}^d$ and an integer $t$ what is the minimum number of edges in the corresponding intersection graph which guarantees there is a point $x\in\{0,1\}^d$ which is within Hamming distance $t$ of at least $r$ of the $A_i$?
\end{q}

Regarding our subcube intersection graphs as discrete analogues of $d$-box graphs suggests allowing larger discrete boxes. Specifically, let $X=[k]^d$ and let each $A_i$ be the product of intervals of integers in $[k]$. What can be said about this family of interval graphs both for fixed $k$ and allowing $k$ to grow?

As we mentioned earlier the boxicity of a graph $G$ is the minimum dimension $d$ for which $G$ is the intersection graph of a family of axis-parallel boxes in $\mathbb{R}^d$. We may define the \emph{binary boxicity} of $G$ to be the minimum dimension $d$ for which $G$ is the intersection graph of a family of subcubes in $Q_d$. This is not a new graph parameter; it is the binary clique cover number of the complement of $G$. However, the idea of binary boxicity may be a new perspective on it. The usual boxicity of a graph has been used to measure the complexity of graphs in various contexts such as the theory of ecological and social networks. It may be that the binary boxicity could have similar uses where the situation is naturally discrete.

Finally, random subcube intersection graphs could be studied. This complements a model of random intersection graphs, introduced in \cite{KSS} which has been the subject of a number of papers. The natural model for us is to choose some $0<p<1/2$ and select a random vector in $C_d=\{0,1,*\}^d$ by setting each coordinate to be $0$ with probability $p$, $1$ with probability $p$, and $*$ with probability $1-2p$ independently of the other coordinates. Repeating this to choose $n$ independent random elements of $C_d$ gives a random intersection graph in $\I(n,d)$. There is considerable dependence between the events that particular edges are present which may make this model challenging to analyse. For large $d$ this model will with high probability give a set of subcubes with nearly equal dimensions. An interesting variation would be to generate each subcube by picking its codimension from a distribution on $[d]$ which is not highly concentrated, and then picking a fixed set uniformly among the sets of that size.

\section*{Appendix: Ramsey results for small $d$}

Although our main focus is approximate results for arbitrary large $d$ we give here a few results for $d=2,3$. When $d=2$ it is easy to determine $R_d(k,l)$ exactly

\begin{theorem}
For $l=2,3,4$ and all $k$ we have $R_2(k,l)=(k-1)(l-1)+1$.
\end{theorem}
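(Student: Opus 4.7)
The plan is to prove the two bounds of $R_2(k,l)=(k-1)(l-1)+1$ separately. The lower bound is immediate from the construction recalled earlier in the paper: taking $k-1$ copies of each of $l-1$ distinct singletons of $\{0,1\}^2$ (possible since $l-1\leq 3<4$) yields a subcube intersection graph that is a disjoint union of $l-1$ cliques of size $k-1$, hence $K_k$-free and $\overline{K}_l$-free.

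For the upper bound I would unify the three cases via a single pigeonhole scheme. Fix $l\in\{2,3,4\}$, set $n=(k-1)(l-1)+1$, and let $\A$ be a family of $n$ subcubes of $\{0,1\}^2$ whose intersection graph contains no $\overline{K}_l$. The goal is to exhibit a set $S\subseteq\{0,1\}^2$ of at most $l-1$ points such that every subcube in $\A$ meets $S$. Once $S$ is in hand, pigeonhole gives a point $x\in S$ lying in at least $\lceil n/(l-1)\rceil=k$ subcubes of $\A$; these pairwise intersect and hence form a $K_k$.

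Two of the three cover lemmas are easy. For $l=2$ the distinct subcubes in $\A$ pairwise intersect, so the Helly property for subcubes noted in the introduction produces a single common point, and $|S|=1$ suffices. For $l=4$, the only $4$-element antichain in the disjointness order on subcubes of $\{0,1\}^2$ is the set of four singletons; hence the no-$\overline{K}_4$ hypothesis forces some singleton, say $\{(1,1)\}$, to be absent from $\A$, and then every element of $\A$ necessarily contains a point of $\{(0,0),(0,1),(1,0)\}$, which serves as $S$.

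The case $l=3$ is where I expect the main obstacle, and I would handle it by a short case split on the distinct singleton types appearing in $\A$. Since any three distinct singletons of $\{0,1\}^2$ are pairwise disjoint, the no-$\overline{K}_3$ hypothesis forces at most two singleton types in $\A$. If two singletons $p,q$ appear, any third subcube disjoint from both would extend $\{p,q\}$ to a $3$-antichain, so a direct inspection of the antipodal and the adjacent subcases shows that every element of $\A$ meets $\{p,q\}$, giving $S=\{p,q\}$. If at most one singleton appears, the remaining elements of $\A$ are faces or the whole cube, and an antipodal pair such as $\{(0,0),(1,1)\}$ (rotated if necessary so that it contains the lone singleton) covers everything, since every face of $\{0,1\}^2$ meets every antipodal pair. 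Combining the three cover lemmas with the pigeonhole step above closes the upper bound and matches the lower bound exactly.
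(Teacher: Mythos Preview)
Your proof is correct and follows essentially the same approach as the paper: for each $l$ you locate a set $S\subseteq\{0,1\}^2$ of size $l-1$ meeting every subcube in the family, then pigeonhole yields a point covered by $k$ subcubes and hence a $K_k$. The paper carries out exactly this covering-plus-pigeonhole scheme, with the same case split for $l=3$ (two distinct singleton types present versus not) and the same ``one singleton type is absent, use the other three points'' argument for $l=4$; your invocation of Helly for $l=2$ and your remark that the only four pairwise disjoint subcubes of $\{0,1\}^2$ are the four singletons are just slightly more explicit versions of what the paper leaves implicit.
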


\begin{proof}
It is trivial that $R_2(k,l)\geq(k-1)(l-1)+1$ and that $R_2(k,2)=k$ (in fact $R_d(k,2)=k$ for all $d$).

For $R_2(k,3)$ suppose that we have a multiset $V$ of $n$ subcubes whose intersection graph does not contain $K_k$ or $\overline{K}_3$. We will show that the largest possible such $n$ is $2(k-1)$. If $V$ contains two distinct singleton subcubes then every subcube in $V$ meets at least one of these singletons (since the intersection graph in $\overline{K}_3$-free). Each singleton is contained in at most $(k-1)$ subcubes in $V$ and so $n\leq 2(k-1)$. If we do not then we may assume without loss of generality than $V$ does not contain either of the singleton subcubes $(0,0)$ and $(1,1)$. But now every subcube in $V$ meets either $(0,1)$ or $(1,0)$ and as before each of these is contained in at most $(k-1)$ subcubes in $V$. It follows that $n\leq 2(k-1)$.

Finally for $R_2(k,4)$ suppose that we have a multiset $V$ of $n$ subcubes whose intersection graph does not contain $K_k$ or $\overline{K}_4$. We cannot have 4 distinct singleton subcubes in $V$ and so we may assume without loss of generality that $V$ does not contain $(0,0)$. But now every subcube in $V$ must meet at least one of the points $(0,1),(1,0),(1,1)$. Since each of these points contains at most $(k-1)$ subcubes in $V$ we conclude that $n\leq 3(k-1)$.
\end{proof}

This kind of case by case argument becomes rather messy even for $d=3$. However in this case we are small enough that an exhaustive computer search is possible. The exact results in the table below are the outcome of this search. In particular note that $R_3(k,l)\not=(k-1)(l-1)+1$ (so long as $k,l\geq3$) so even in this small dimension the situation is not trivial. We also have that $R_3(k,l)\not=R(k,l)$ unless $k=l=3$. For each $(k,l)$ considered we give a graph on $R_3(k,l)-1$ vertices which does not contain a $K_k$ or a $\overline{K}_l$. The vertices of this graph are given as elements of $C_3$.

\[
\left.
\begin{array}{|c|c|l|}
\hline
(k,l) & R_3(k,l) & \text{Extremal graph}\\
\hline\hline
(3,3) & 6 & (**0),(**0),(*11),(0*1),(11*)\\ \hline
(4,3) & 8 & (**0),(**0),(**1),(*0*),(*11),(0*1),(11*)\\ \hline
(5,3) & 11 & (**0),(**0),(*0*),(*0*),(*11),(*11),(0*1),(0*1),(11*),(110)\\ \hline
(6,3) & 13 & (**0),(**0),(**0),(**1),(*0*),(*0*),(*11),(*11),(0*1)\\ 
& &(0*1),(11*),(11*)\\ \hline
(3,4) & 8 & (**0),(*01),(0*1),(01*),(10*),(11*),(111)\\ \hline
(4,4) & 11 & (**0),(**0),(*01),(*01),(*11),(0*1),(01*),(10*),(11*),(111)\\
\hline
\end{array}
\right.
\]


\end{document}